\newcommand{\BlackBoxes}{\global\overfullrule5pt}
\newcommand{\R}{\mathbb{R}}
\newcommand{\N}{\mathbb{N}}
\newcommand{\E}{\mathbb{E}}
\newcommand{\PP}{\mathrm{P}}
\newcommand{\Eop}{\operatorname{\mathbb{\E}}}
\newcommand{\Pop}{\operatorname{\mathbb{\PP}}}
\newtheorem{theorem}{Theorem}
\newtheorem{lemma}[theorem]{Lemma}
\theoremstyle{definition}
\newtheorem{example}[theorem]{Example}
\newtheorem{remark}[theorem]{Remark}
\numberwithin{equation}{section} \numberwithin{theorem}{section}
\def\0{\kern0pt\-\nobreak\hskip0pt\relax}
 \def\@serieslogo{%
 \vbox to\headheight{%
 \parindent\z@ \fontsize{6}{7\p@}\selectfont
%\today $\;$ NB\endgraf Draft, don't distribute
 \vss}}}
\def\makeoverbar#1#2#3#4#5#6#7{%
 \setbox0=\hbox{$\m@th#2\mkern#5mu{{}#3{}}\mkern#6mu$}%
 \setbox1=\null \dimen@=#4\fontdimen8#13 \dimen@=3.5\dimen@
 \advance\dimen@ by \ht0 \dimen@=-#7\dimen@ \advance\dimen@ by \wd0
 \ht1=\ht0 \dp1=\dp0 \wd1=\dimen@
 \dimen@=\fontdimen8#13 \fontdimen8#13=#4\fontdimen8#13
 \rlap{\hbox to \wd0{$\m@th\hss#2{\overline{\box1}}\mkern#5mu$}}
 \fontdimen8#13=\dimen@}
\def\mylabel#1#2{{\def\@currentlabel{#2}\label{#1}}}
\begin{document}

%\listoftodos

\makeatletter \providecommand\@dotsep{5} \makeatother
%\listoftodos[Changes in Orange/Red To Do List in Green / Blue]\relax

\title[Zero-sum Risk-Sensitive Stochastic Games]{Zero-sum Risk-Sensitive Stochastic Games}

\author[N. \smash{B\"auerle}]{Nicole B\"auerle${}^*$}
\address[N. B\"auerle]{Institute for Stochastics,
Karlsruhe Institute of Technology, D-76128 Karlsruhe, Germany}

\email{\href{mailto:nicole.baeuerle@kit.edu}
{nicole.baeuerle@kit.edu}}

%\thanks{${}^\ddag$ The underlying projects have been funded by the Bundesministerium f\"ur Bildung und
%Forschung of Germany under promotional reference 03BAPAC1. The
%authors are responsible for the content of this article.}

\author[U. \smash{Rieder}]{Ulrich Rieder$^\ddag$}
\address[U. Rieder]{University of Ulm, D-89069 Ulm, Germany}

\email{\href{mailto:ulrich.rieder@uni-ulm.de} {ulrich.rieder@uni-ulm.de}}

\maketitle

\begin{abstract}
In this paper we consider two-person zero-sum risk-sensitive stochastic dynamic games with Borel state and action spaces and bounded reward. The term {\em risk-sensitive} refers to the fact that instead of the usual risk neutral optimization criterion we consider the exponential certainty equivalent. The discounted reward case on a finite and an infinite time horizon is considered, as well as the ergodic reward case. Under continuity and compactness conditions we prove that the value of the game exists and solves the Shapley equation and we show the existence of optimal (non-stationary) strategies. In the ergodic reward case we work with a local minorization property and a Lyapunov condition and show that the value of the game solves the Poisson equation. Moreover, we prove the existence of optimal stationary strategies. A simple example highlights the influence of the risk-sensitivity parameter. Our results generalize findings in \cite{bg14} and answer an open question posed there.
\end{abstract}

\vspace{0.5cm}
\begin{minipage}{14cm}
{\small
\begin{description}
\item[\rm \textsc{ Key words}]
{\small Risk-sensitive stochastic games, ergodic reward, Shapley equation, Poisson equation.}
%\item[\rm \textsc{AMS subject classifications}] {\small  , , . }
\end{description}
}
\end{minipage}

\section{Introduction}\label{sec:intro}\noindent
In this paper we consider two-person zero-sum risk-sensitive stochastic dynamic games with finite or infinite time horizon. The discounted reward case as well as the ergodic reward case are considered. The term {\em risk-sensitive} refers to the fact that instead of the usual risk neutral optimization criterion we consider the exponential certainty equivalent which for a random variable $X$ and a risk sensitivity parameter $\gamma\neq 0$ is defined by
$$ \rho(X) = \frac 1\gamma\ln \Big( \Eop e^{\gamma X}\Big).$$
For small $\gamma$ a Taylor series expansion reveals that
$$ \rho(X) = \Eop X +\frac12 \gamma Var(X) +O(\gamma^2)$$
and that $\gamma\to 0$ corresponds to the risk-neutral case. Hence the exponential certainty equivalent constitutes a risk-adjusted expectation similar to the classical mean-variance criterion. For deeper economical insight, we refer the reader to \cite{bpli03}.  In case $\gamma >0$, the variance is added and the first player is risk-seeking whereas for $\gamma<0$ the variance is subtracted and the first player is risk-averse. In the context of a zero-sum game with $\gamma<0$ where the first player  wants to maximize the  exponential certainty equivalent  and the second player wants to minimize the  exponential certainty equivalent, the second player is often interpreted as the {\em nature} which works against the first player. Thus the game can be seen as a kind of worst-case optimization. Risk-sensitive optimization problems have been considered since the seminal paper \cite{hm72}, but it was only lately that this topic gained renewed interest. This is mainly due to applications in finance.
%and the increasing usage of risk measures (see e.g. \cite{dl14,bj14}).

Whereas there are a lot of papers on risk-sensitive Markov Decision processes under different optimization criteria (see e.g. \cite{hm72,whi90,jjr94,hhm99,DiMasiStettner,bm02,cchh05,ds07,j07,cchh11,br14,sos15,bj14,br15}), there are only a few papers on risk-sensitive stochastic games in discrete time. In \cite{klo00} the author considers Nash equilibria for a two-person non-zero-sum game with a quadratic-exponential cost criterion and in \cite{jn14} the authors treat so-called overlapping generations models. In \cite{bg14} the authors consider two-person zero-sum risk-sensitive stochastic games with countable state space under the discounted cost criterion and the ergodic cost criterion. In their model the authors replace the one-stage cost in the exponential by the average over the randomized policies and later make the simplifying assumption that the one-stage cost does not depend on the actions of the two players. The average  cost problem is solved with a standard uniform ergodicity condition.
There are more papers on risk-sensitive stochastic differential games, i.e. games in continuous time (see e.g. \cite{ekh03,fhh11,bg12,dj15}). But the solution techniques via Backward Stochastic Differential Equations and Hamilton-Jacobi-Bellman-Isaacs equations are quite different.

In this paper we extend the results of \cite{bg14}. Throughout we work with Borel state and action spaces and general one-stage rewards which may depend on actions of both players. The one-stage rewards are assumed to be bounded. Since we use randomized actions, we interpret expectations w.r.t.\ the evolution of the state process and the randomization over actions. First we consider the finite horizon stochastic game, then the infinite horizon discounted game. We show that the value of the game exists and satisfies the Shapley equation. Moreover, we prove the existence of optimal strategies for both players which has been posed as an open problem in \cite{bg14}.  In order to be able to apply the usual minimax theorem for the existence of saddle points, we need convexity of the expression in the Shapley equation w.r.t.\ to the randomization measures for the actions of player 1 and 2. This is achieved by applying a trick which has been used in \cite{jjr94} before: We consider a modified transition measure which is not necessarily a probability measure any more and contains part of the reward. The same approach also works for the infinite horizon with discount factor $\beta<1$. We again obtain the Shapley equation for the value of the game and show the existence of optimal strategies - a statement which has been posed as an open question in \cite{bg14} (Remark 1). We treat both the risk-seeking ($\gamma >0$) and the risk-averse case ($\gamma <0$). In the ergodic reward case we assume $\beta=1$ and use a local minorization property together with a Lyapunov condition in order to obtain ergodicity (see \cite{HM,sos15}). For $\gamma=0$, i.e.\ in the risk-neutral situation, our condition reduces to the one in \cite{HM} for the ergodicity of Markov chains.  Here we show for small $\gamma$ that the value of the game does not depend on the initial state and is a partial solution of the Poisson equation. Moreover, there exist optimal stationary strategies for both players.

Our paper is organized as follows: In the next section we introduce our model and formulate the problem for a finite time horizon. Moreover, we introduce some continuity and compactness conditions which we use throughout our paper for the existence of optimal strategies. In section \ref{sec:fh} we first consider the problem with a finite time horizon and prove the Shapley equation for the value of the game and the existence of optimal (non-stationary) strategies. In this section we also present a simple example which highlights the influence of the risk sensitivity parameter $\gamma$ on the optimal strategies for the players. In Section \ref{sec:infh} we consider the infinite horizon discounted reward game. Under the same condition as in Section \ref{sec:fh} we show a Shapley equation for the optimal value of the game and prove the existence of optimal stationary strategies. Finally in Section \ref{sec:arg} we solve the ergodic risk-sensitive game under local ergodicity conditions.
%Here we borrow ideas form \cite{DiMasiStettner} to show the existence of a global span-contraction of the optimality operator.

\section{Model Formulation}\label{sec:intro}\noindent
We suppose that the following {\em two-person zero-sum} stochastic game is given: The state space is a Borel set $\mathbb{E}$, the action spaces for player 1 and 2 are denoted by $\mathbb{A}$ and $\mathbb{B}$ respectively and are Borel spaces too.
%We denote by $\mathbb{A}$ and $\mathbb{B}$ measurable subsets of $\mathbb{E}\times\mathbb{X}$ and $\mathbb{E}\times\mathbb{Y}$ respectively.
It is assumed that $\mathbb{E}\times\mathbb{A}$ contains the graph of a measurable map from $\mathbb{E}$ into $\mathbb{A}$ and that  $\mathbb{E}\times\mathbb{B}$ contains the graph of a measurable map from $\mathbb{E}$ into $\mathbb{B}$. For any $x\in \mathbb{E}$, the non-empty and measurable $x$-section $\mathbb{A}_x$ of $\mathbb{A}$ denotes the set of all {\em admissible actions} for player 1, if the system is in state $x$. The analogous assumptions and notations are used for player 2. We suppose that there is a {\em transition kernel} $Q$ from $\mathbb{E}\times\mathbb{A}\times\mathbb{B}$ to $\mathbb{E}$ and
%$$ \sup_{(x,a,b)} Q(\mathbb{E}|x,a,b) < \infty.$$
%Note that $q$ is not necessarily a probability measure. Further we assume that there is
a measurable reward function $r: \mathbb{E}\times\mathbb{A} \times\mathbb{B}\to \R$ for player 1 which is {\em bounded} and without loss of generality $0\le r \le\bar{r}$.

The game is played as follows: At each stage both players observe the current state $x\in\mathbb{E}$ and choose actions $a\in \mathbb{A}_x$ and $b\in \mathbb{B}_x$ independently of each other. Player 1 then receives the reward $r(x,a,b)$. Afterwards the system moves to a new state according to the transition kernel $Q(\cdot|x,a,b)$. The reward may be discounted by a factor $\beta\in(0,1]$. The aim of the first player is to maximize the risk-sensitive accumulated reward of the stochastic game over a finite or an infinite time horizon and the aim of the second player is to minimize it. Before we introduce the objectives, let us first define the policies. Since even for ordinary risk-neutral games it is known that optimal policies can only be found in the class of randomized policies, we have to consider this class too. Let us denote by $\mathbb{P}(\mathbb{A})$ the set of all probability measures on $\mathbb{A}$, endowed with the weak topology. Then
$$ \{ (x,\mu,\nu) : x\in \mathbb{E}, \mu \in \mathbb{P}(\mathbb{A}_x), \nu \in  \mathbb{P}(\mathbb{B}_x)\}$$
is a measurable subset of $ \mathbb{E}\times \mathbb{P}(\mathbb{A})\times \mathbb{P}(\mathbb{B})$.
We will denote by $\mathbb{F}$ the set of all measurable mappings $f:\mathbb{E}\to \mathbb{P}(\mathbb{A})$ such that $f(x) \in \mathbb{P}(\mathbb{A}_x)$ for all $x\in\mathbb{E}$. The decision rule $f$ is interpreted as a randomized decision rule for player 1. A {\em randomized Markovian policy} is then a sequence of decision rules $(f_n)$ with $f_n\in \mathbb{F}$. Similarly for player 2, we denote the set of all randomized decision rules by $\mathbb{G}$ and elements by $g\in \mathbb{G}$. In this paper we do not consider non-Markovian policies, since as for the risk-neutral game, it can be shown that the value functions remain unchanged if the set of admissible policies is enlarged to the class of history-dependent policies. This has first been shown for risk-neutral Markov Decision Processes in Theorem 18.4 of \cite{hin70}. For general certainty equivalents the question has been considered in \cite{br14}. Though in general optimal policies are history dependent, the statement holds for the exponential certainty equivalent. The proof given in \cite{br14} directly generalizes to the game setting. A randomized {\em stationary} policy is a Markovian policy $(f_n)$ where $f_n=f$ independent of $n$.

Two randomized policies $\pi=(f_n)$ and $\sigma=(g_n)$ together with the initial state $x$ define according to the Theorem of Ionescu Tulcea a probability measure $\mathrm{P}_x^{\pi\sigma}$ on $(\mathbb{E}\times \mathbb{A}\times \mathbb{B})^\infty$ by
\begin{eqnarray*} && \Pop_x^{\pi\sigma} (A_0\times B_0\times C_1\times \ldots A_{n-1}\times B_{n-1}\times C_n\times\mathbb{E}\times \mathbb{A}\times \mathbb{B}\times\ldots ) = \\
&=&\int_{A_0}\int_{B_0}\int_{C_1}\ldots \int_{C_n}Q(dx_n|x_{n-1},a_{n-1},b_{n-1})f_{n-1}(x_{n-1})(da_{n-1}) g_{n-1}(x_{n-1})(db_{n-1})\ldots\\
&& \ldots Q(dx_1|x_0,a_0,b_0) f_{0}(x_{0})(da_{0}) g_{0}(x_{0})(db_{0}). \end{eqnarray*}
The reward generated by the policies over $N$ stages is
\begin{equation*} R_N := \sum_{k=0}^{N-1} \beta^k r(X_k,A_k,B_k).
\end{equation*}
The risk-sensitive reward (exponential certainty equivalent) for player 1 under the policies $\pi$ and $\sigma$ is then given by
\begin{equation}\label{eq:CE} \frac1\gamma \ln \Eop_x^{\pi\sigma}\Big[ \exp\big( \gamma R_N\big) \Big]\end{equation}
where $\Eop_x^{\pi\sigma}$ is the expectation under $\Pop_x^{\pi\sigma}$ and $(A_0,B_0,X_1,\ldots)$ is the process under $\pi$ and $\sigma$. We first assume that the risk-sensitivity parameter $\gamma\in (0,\bar{\gamma}]$.
Since $\gamma$ is positive and the logarithm is monotone we can equivalently consider the optimization criterion
\begin{equation} V_{N\pi\sigma}(x,\gamma) := \Eop_x^{\pi\sigma}\Big[ \exp\big( \gamma R_N\big) \Big].\end{equation}
The term zero-sum game refers to the fact that the total utility of player 1 is $\exp\big( \gamma R_N \big)$ and we assume that the total utility of player 2 is $-\exp\big( \gamma R_N\big)$.
The corresponding {\em upper value} of the game over $N$ stages is given by
\begin{equation}\label{eq:uppervalue} V_{N}(x,\gamma) := \inf_\sigma \sup_\pi V_{N\pi\sigma}(x,\gamma). \end{equation}
If $\inf_\sigma \sup_\pi V_{N\pi\sigma}(x,\gamma) = \sup_\pi  \inf_\sigma V_{N\pi\sigma}(x,\gamma)$, the function $V_N$ is called {\em value function} of the stochastic game. A policy $\pi^*$ is called {\em optimal for player 1} for the $N$-stage game if
\begin{equation*} V_{N\pi^*\sigma}(x,\gamma) \ge \sup_\pi \inf_{\sigma'}  V_{N\pi\sigma'}(x,\gamma),\; \mbox{ for all } \sigma. \end{equation*}
 A policy $\sigma^*$ is called {\em optimal for player 2} for the $N$-stage game if
\begin{equation*} V_{N\pi\sigma^*}(x,\gamma) \le  \inf_\sigma \sup_{\pi'} V_{N\pi'\sigma}(x,\gamma),\; \mbox{ for all } \pi. \end{equation*}
A pair of policies $(\pi^*,\sigma^*)$ for which
\begin{equation*}\label{eq:saddlepoint}
    V_{N\pi\sigma^*} \le V_{N\pi^*\sigma^*} \le V_{N\pi^*\sigma},\quad\mbox{for all } \pi,\sigma
\end{equation*}
is called {\em saddle-point equilibrium} and implies that $\pi^*$ is optimal for player 1, $\sigma^*$ is optimal for player 2 and that $V_{N\pi^*\sigma^*} =V_N$.
In order to ease notation we will in general call $(\pi^*,\sigma^*)$ a saddle-point of a function $v(\pi,\sigma)$ if
\begin{equation*}
    v(\pi,\sigma^*) \le v(\pi^*,\sigma^*) \le v(\pi^*,\sigma),\quad\mbox{for all } \pi,\sigma.
\end{equation*}
Throughout we make the following assumptions (see e.g. \cite{br11}):

\begin{enumerate}
  \item[(A1)] The sets $\mathbb{A}_x$ and $\mathbb{B}_x$ are compact for every $x\in\mathbb{E}$.
  \item[(A2)] The correspondences $x\mapsto \mathbb{A}_x$ and $x\mapsto \mathbb{B}_x$ are continuous. We call a correspondence $x\mapsto \mathbb{A}_x$ continuous if it is upper-semicontinuous, meaning that $\{x\in \mathbb{E} : \mathbb{A}_x\cap C\neq \emptyset\}$ is closed in $\mathbb{E}$ for every closed subset $C\subset \mathbb{A}$, and lower-semicontinuous if $\{x\in \mathbb{E} : \mathbb{A}_x\cap D\neq \emptyset\}$ is open in $\mathbb{E}$ for every open subset $D\subset \mathbb{A}$.
  \item[(A3)] The mapping $(x,a,b) \mapsto r(x,a,b)$ is continuous.
  \item[(A4)] The transition measure $Q$ is weakly continuous, i.e. for all bounded and continuous $v:\mathbb{E}\to\R$ we have that $(x,a,b) \mapsto \int v(x') Q(dx'|x,a,b)$ is continuous.
  \end{enumerate}

Note that the assumptions are of course satisfied when $\mathbb{E}$ is countable and $\mathbb{A}_x$ and $\mathbb{B}_x$ are finite.

\section{Finite Horizon Discounted Game}\label{sec:fh}
We solve the problem by transforming it to the classical risk-neutral setup with a different transition measure. Such a transformation has also been used in the context of risk-sensitive partially observable models (see \cite{jjr94}). Recall that we first restrict to the case $\gamma>0$. Let us define a transition measure $\tilde{Q}$ from $\mathbb{E}\times(0,\bar{\gamma}]\times\mathbb{A}\times\mathbb{B}$ to $\mathbb{E}$ by
\begin{equation}\label{eq:tildeQ}
    \tilde{Q}(dx'|x,\gamma,a,b) := e^{\gamma r(x,a,b)} Q(dx'|x,a,b), \quad (x,\gamma,a,b) \in \mathbb{E}\times(0,\bar{\gamma}]\times\mathbb{A}\times\mathbb{B}.
\end{equation}
%Thus we obtain for a measurable mapping $v:\mathbb{E}\to\R$ that
%\begin{equation}\label{eq:tildeQ2}
%        \int v(x') \tilde{Q}(dx'|x,a,b,z) = e^{z\gamma r(x,a,b)} \int v(x') Q(dx'|x,a,b)
%\end{equation}
It follows directly from this definition that $\tilde{Q}$ is also weakly continuous in $(x,\gamma,a,b)$. Moreover, define for $\mu\in\mathbb{P}(\mathbb{A}_x)$ and for $\nu\in \mathbb{P}(\mathbb{B}_x)$
\begin{equation}\label{eq:tildeQ3}
    \tilde{Q}(dx'|x,\gamma,\mu,\nu) := \int\int \tilde{Q}(dx'|x,\gamma,a,b)\mu(da)\nu(db), \quad (x,\gamma) \in \mathbb{E}\times (0,\bar{\gamma}].
\end{equation}

For $f\in \mathbb{F}, g\in \mathbb{G}$, $\mu\in\mathbb{P}(\mathbb{A}_x), \nu\in \mathbb{P}(\mathbb{B}_x)$ and a bounded, measurable function $v:\mathbb{E}\times (0,\bar{\gamma}]\to\R$ let us introduce the following operators:
\begin{eqnarray*}
% \nonumber to remove numbering (before each equation)
  (T_{fg}v)(x,\gamma) &:=& \int v(x',\beta \gamma) \tilde{Q}(dx'|x,\gamma,f(x),g(x)),\\
  (Lv)(x,\gamma,\mu,\nu) &:=& \int v(x',\beta \gamma) \tilde{Q}(dx'|x,\gamma,\mu,\nu), \\
  (Tv)(x,\gamma) &:=&  \inf_g \sup_f (T_{fg}v)(x,\gamma)\\
  &=& \inf_\nu\sup_\mu (Lv)(x,\gamma,\mu,\nu).
\end{eqnarray*}

%Note that this definition differs from the one in \cite{bg14} where the reward in the iteration is replaced by its expectation.

%\begin{remark}
%In case $(Tv)(x,\gamma) =   (T_{f^*g^*}v)(x,\gamma)$ the pair $(f^*,g^*)$ not only depends on $x$ but also on the value of $\gamma$.
%Thus we will later also write $f^*(x) = \mu^*(x,\gamma)$ and $g^*(x) = \nu^*(x,\gamma)$ and use the notation $T_{\mu\nu}$ which is defined by
%$$ (T_{\mu\nu}v)(x,z) := \int v(x',\beta \gamma) \tilde{Q}(dx'|x,\gamma,\mu(x,\gamma),\nu(x,\gamma)).$$
%Of course we have that $ (Tv)(x,\gamma) = \inf_\nu\sup_\mu (T_{\mu\nu}v)(x,z)$.
%\end{remark}

In what follows we always set $V_0(x,\gamma) = 1$. The value of the game for fixed policies can be computed with the help of the $T_{fg}$-operator.

\begin{theorem}\label{theo:iteration}
Let $\pi=(f_0,f_1,\ldots)$ and $\sigma=(g_0,g_1,\ldots)$ be randomized policies for player 1 and 2 respectively. Then it holds for all $n=1,\ldots ,N$ that
\begin{itemize}
  \item[a)] $ V_{n\pi\sigma}(x,\gamma)= \int V_{n-1 \overrightarrow{\pi},\overrightarrow{\sigma}}(x',\beta \gamma) \tilde{Q}\big(dx'|x,f_0(x),g_0(x),\gamma\big) = (T_{f_0g_0}V_{n-1 \overrightarrow{\pi},\overrightarrow{\sigma}})(x,\gamma),$ where $\overrightarrow{\pi}=(f_1,f_2,\ldots)$ and $\overrightarrow{\sigma}=(g_1,g_2,\ldots)$ are the shifted policies.
  \item[b)] $ V_{n\pi\sigma}= T_{f_0g_0}T_{f_1g_1}\ldots T_{f_{n-1}g_{n-1}} V_0.$
\end{itemize}

\end{theorem}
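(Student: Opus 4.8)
The plan is to establish part (a) by a single one-step conditioning argument and then deduce part (b) by induction on $n$, using (a) as the recursion.

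For part (a) the starting point is the multiplicative decomposition of the exponential reward. Writing $R_n = r(X_0,A_0,B_0) + \sum_{k=1}^{n-1}\beta^k r(X_k,A_k,B_k)$ and factoring a $\beta$ out of the tail,
\[
\sum_{k=1}^{n-1}\beta^k r(X_k,A_k,B_k) = \beta\sum_{j=0}^{n-2}\beta^j r(X_{j+1},A_{j+1},B_{j+1}),
\]
I would observe that $\exp(\gamma R_n) = \exp\big(\gamma r(X_0,A_0,B_0)\big)\,\exp\big(\beta\gamma\,\widetilde{R}_{n-1}\big)$, where $\widetilde{R}_{n-1}$ is exactly the $(n-1)$-stage reward of the process shifted forward by one time step. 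The crucial point here — and the reason $V$ carries $\gamma$ as a second argument — is that the discount factor absorbed from the tail rescales the risk-sensitivity parameter from $\gamma$ to $\beta\gamma$.

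Next I would disintegrate $\Pop_x^{\pi\sigma}$ over the first transition. By the Ionescu--Tulcea construction, conditioning on $(A_0,B_0,X_1)=(a,b,x')$ leaves the remaining coordinates distributed as the process governed by the shifted policies $\overrightarrow{\pi}=(f_1,f_2,\ldots)$ and $\overrightarrow{\sigma}=(g_1,g_2,\ldots)$ started in $x'$, so the conditional expectation of $\exp(\beta\gamma\,\widetilde{R}_{n-1})$ equals $V_{n-1,\overrightarrow{\pi},\overrightarrow{\sigma}}(x',\beta\gamma)$. Pulling out the factor $\exp(\gamma r(x,a,b))$ and integrating against $Q(dx'|x,a,b)\,f_0(x)(da)\,g_0(x)(db)$, I would recognize $e^{\gamma r(x,a,b)}Q(dx'|x,a,b)$ as $\tilde{Q}(dx'|x,\gamma,a,b)$ by \eqref{eq:tildeQ}, and then use \eqref{eq:tildeQ3} with $\mu=f_0(x)$, $\nu=g_0(x)$ to collapse the $a,b$-integrals into $\tilde{Q}(dx'|x,\gamma,f_0(x),g_0(x))$. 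This gives the first equality in (a); the second is merely the definition of $T_{f_0g_0}$ applied to $v=V_{n-1,\overrightarrow{\pi},\overrightarrow{\sigma}}$.

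Part (b) then follows by induction on $n$. For the base case $n=1$ I would verify $V_{1\pi\sigma}=T_{f_0g_0}V_0$ directly: since $R_1=r(X_0,A_0,B_0)$, $V_0\equiv 1$, and $Q(\cdot|x,a,b)$ is a probability measure, both sides reduce to $\int\!\int e^{\gamma r(x,a,b)}f_0(x)(da)\,g_0(x)(db)$. For the inductive step I would apply (a) to peel off the leading operator, $V_{n\pi\sigma}=T_{f_0g_0}V_{n-1,\overrightarrow{\pi},\overrightarrow{\sigma}}$, and then invoke the induction hypothesis on the shifted policies, $V_{n-1,\overrightarrow{\pi},\overrightarrow{\sigma}}=T_{f_1g_1}\cdots T_{f_{n-1}g_{n-1}}V_0$. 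The main obstacle is bookkeeping rather than depth: keeping the index shift and the simultaneous rescaling $\gamma\mapsto\beta\gamma$ consistent throughout, and justifying the interchange of the order of integration when disintegrating over the first transition, which is legitimate by Tonelli since the integrand is nonnegative and bounded ($0\le r\le\bar{r}$ and $\gamma\in(0,\bar{\gamma}]$).
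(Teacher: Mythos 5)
Your proposal is correct and follows essentially the same route as the paper: the core step in both is the multiplicative split of $\exp(\gamma R_n)$, the disintegration of $\Pop_x^{\pi\sigma}$ over the first transition so that the tail becomes $V_{n-1,\overrightarrow{\pi},\overrightarrow{\sigma}}(x',\beta\gamma)$, and the absorption of $e^{\gamma r(x,a,b)}$ into $\tilde{Q}$. The only (cosmetic) difference is bookkeeping: the paper phrases part a) as an induction and gets b) for free, whereas you prove a) directly by one-step conditioning and place the induction in b) — both are sound.
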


\begin{proof}
It suffices to prove part a), since part b) follows directly from a) and the definition of the operators.
Part a) is done by induction. For $n=1$ we obtain
\begin{eqnarray*}
% \nonumber to remove numbering (before each equation)
  V_{1\pi\sigma}(x,\gamma) &=&  \Eop_x^{\pi\sigma} \Big[ \exp\big(\gamma r(X_0,A_0,B_0)\big)\Big] \\
   &=&  \int\int \exp\big( \gamma r(x,a,b)\big) f_0(x)(da) g_0(x)(db)  \\
   &=& \int V_{0}\; \tilde{Q}\big(dx'|x,\gamma,f_0(x),g_0(x)\big).
\end{eqnarray*}
Now suppose the statement is true for $n-1$. We obtain
\begin{eqnarray*}
% \nonumber to remove numbering (before each equation)
  && V_{n\pi\sigma}(x,\gamma) =  \Eop_x^{\pi\sigma}\Big[ \exp\Big( \gamma \sum_{k=0}^{n-1} \beta^k r(X_k,A_k,B_k)\Big) \Big] \\
   &=& \int\int\int \exp\big(\gamma r(x,a,b)\big) \cdot\\
   && \cdot\Eop_{x_1}^{ \overrightarrow{\pi}\overrightarrow{\sigma}}\Big[\exp\Big( \gamma \sum_{k=1}^{n-1} \beta^k r(X_k,A_k,B_k)\Big)\Big] Q(dx_1| x,a,b) f_0(x)(da) g_0(x)(db) \\
   &=& \int \Eop_{x_1}^{ \overrightarrow{\pi}\overrightarrow{\sigma}}\Big[\exp\Big( \beta\gamma \sum_{k=0}^{n-2} \beta^k r(X_k,A_k,B_k)\Big)\Big] \tilde{Q}(dx_1| x,\gamma,f_0(x),g_0(x))
\end{eqnarray*}
and the induction hypothesis implies the statement.
\end{proof}

The next theorem provides the solution of the problem. Let us denote by $\mathcal{C}$ the set of continuous and bounded functions on $\mathbb{E}\times (0,\bar{\gamma}]$.

\begin{theorem}\label{theo:finite}
Assume (A1)-(A4).
\begin{itemize}
  \item[a)] For all $n=1,\ldots,N$ it holds that $V_n\in \mathcal{C}$ and $$V_n(x,\gamma) = (TV_{n-1})(x,\gamma) = \sup_\mu \inf_\nu \int V_{n-1}(x',\beta \gamma) \tilde{Q}(dx'|x,\gamma,\mu,\nu).$$
  \item[b)]  For $n=1,\ldots,N$ there exist measurable functions  $(\mu_n^*,\nu_n^*):\mathbb{E}\times (0,\bar{\gamma}]\to \mathbb{P}(\mathbb{A})\times \mathbb{P}(\mathbb{B}) $ which are admissible, i.e. $\mu_n^*(x,\gamma)\in \mathbb{P}(\mathbb{A}_x)$ and $\nu_n^*(x,\gamma)\in \mathbb{P}(\mathbb{B}_x)$ such that $(\mu_n^*(x,\gamma),\nu^*(x,\gamma))$ is a saddle point of
      $$ (\mu,\nu) \mapsto  LV_{n-1}\big(x,\gamma,\mu,\nu\big) $$ for all $(\mu,\nu) \in  \mathbb{P}(\mathbb{A}_x)\times\mathbb{P}(\mathbb{B}_x)$.
%   \begin{eqnarray*} &&LV_{n-1}\big(x,\gamma,\mu,\nu_n^*(x,\gamma)\big) \le LV_{n-1}\big(x,\gamma,\mu_n^*(x,\gamma),\nu_n^*(x,\gamma)\big)\le LV_{n-1}\big(x,\gamma,\mu^*(x,\gamma),\nu\big),%\\
  % &&\hspace*{8.5cm}\mbox{for all } (\mu,\nu) \in  \mathbb{P}(\mathbb{A}_x)\times\mathbb{P}(\mathbb{B}_x).
%  \end{eqnarray*}
    Then $V_N$ is the value of the $N$-stage stochastic game and $(\pi^*,\sigma^*)=({f}_n^*,{g}_n^*)_{n=0,\ldots,N-1}$ with ${f}_n^*(x) := \mu_{N-n}^*(x,\gamma\beta^n)$ and  ${g}_{n}^*(x) := \nu_{N-n}^*(x,\gamma\beta^n)$ are optimal policies for player 1 and 2 respectively.
\end{itemize}
\end{theorem}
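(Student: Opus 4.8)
The plan is to organize everything around the one-step operator $T$ and prove the result by induction on $n$. Set $\hat V_n := T\hat V_{n-1}$ with $\hat V_0 = V_0 \equiv 1$. The substance of the theorem then splits into two layers: (I) each $\hat V_n$ is well-defined, lies in $\mathcal{C}$, and is produced by a measurably-selected saddle point of $(\mu,\nu)\mapsto L\hat V_{n-1}(x,\gamma,\mu,\nu)$ with $\sup_\mu\inf_\nu = \inf_\nu\sup_\mu$; and (II) this recursively defined $\hat V_n$ coincides with the upper value $V_n$ of \eqref{eq:uppervalue} and is attained by the policies of part (b). Layer (I) is the per-stage (static) content and is proved inductively, the base case $\hat V_0\equiv 1\in\mathcal{C}$ being trivial; layer (II) is the dynamic-programming verification that delivers both the Shapley equation of (a) and the optimality of (b) simultaneously. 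Throughout, Theorem~\ref{theo:iteration} provides the bridge $V_{n\pi\sigma}=T_{f_0g_0}V_{n-1\,\overrightarrow\pi\,\overrightarrow\sigma}$ that lets me compute fixed-policy values by iterating the operators.

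The decisive structural point — and the reason the transformed measure $\tilde Q$ was introduced, following \cite{jjr94} — is that
\[
L\hat V_{n-1}(x,\gamma,\mu,\nu)=\int\!\!\int\Big(\int \hat V_{n-1}(x',\beta\gamma)\,\tilde Q(dx'\,|\,x,\gamma,a,b)\Big)\,\mu(da)\,\nu(db)
\]
is \emph{bilinear} in $(\mu,\nu)$: linear, hence both convex and concave, in $\mu$ for fixed $\nu$ and likewise in $\nu$. By (A1) the sets $\mathbb{A}_x,\mathbb{B}_x$ are compact, so $\mathbb{P}(\mathbb{A}_x)$ and $\mathbb{P}(\mathbb{B}_x)$ are convex and weakly compact; the inner integral is continuous in $(a,b)$ because $\tilde Q$ is weakly continuous and $\hat V_{n-1}\in\mathcal{C}$, and weak convergence of $\mu,\nu$ then makes $L\hat V_{n-1}$ jointly continuous on $\mathbb{P}(\mathbb{A}_x)\times\mathbb{P}(\mathbb{B}_x)$. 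With bilinearity, convex compact strategy sets, and continuity in hand, Fan's minimax theorem yields a saddle point and the equality $\sup_\mu\inf_\nu = \inf_\nu\sup_\mu$, which is precisely the minimax identity asserted in (a). This is exactly the place where the $\tilde Q$-trick pays off: in the original exponential formulation the payoff would be convex but not affine, obstructing a clean minimax argument.

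For $\mathcal{C}$-invariance I would invoke a Berge-type maximum theorem for parametrized games: using the continuity of the correspondences $x\mapsto\mathbb{A}_x,\ x\mapsto\mathbb{B}_x$ from (A2) (which transfers to $x\mapsto\mathbb{P}(\mathbb{A}_x),\ x\mapsto\mathbb{P}(\mathbb{B}_x)$) together with the joint continuity just established, the saddle value $(x,\gamma)\mapsto\inf_\nu\sup_\mu L\hat V_{n-1}(x,\gamma,\mu,\nu)=T\hat V_{n-1}(x,\gamma)$ is continuous; boundedness follows from $0\le r\le\bar r$, which gives $\tilde Q(\mathbb{E}\,|\,\cdot)=e^{\gamma r}\le e^{\bar\gamma\bar r}$ and hence $\|\hat V_n\|_\infty\le e^{n\bar\gamma\bar r}$, so $\hat V_n\in\mathcal{C}$. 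The remaining ingredient — measurable maps $(x,\gamma)\mapsto(\mu_n^*,\nu_n^*)$ realizing the saddle — follows from the standard selection machinery for games under these continuity/compactness data (as in \cite{br11}). This measurable-selection step is the one I expect to be the main obstacle: the minimax theorem supplies a saddle only pointwise, and one must upgrade this to a selector that is jointly measurable in the parameter $(x,\gamma)$, which is where the Borel-space regularity assumptions are genuinely used.

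Finally, for layer (II) I fix the selectors and set $f_n^*(x)=\mu_{N-n}^*(x,\gamma\beta^n)$ and $g_n^*(x)=\nu_{N-n}^*(x,\gamma\beta^n)$, the shift $\gamma\beta^n$ tracking the scaling that $T_{fg}$ injects into the $\gamma$-argument at each stage. Optimality is then a backward induction built on Theorem~\ref{theo:iteration} and the monotonicity of the operators $T_{fg}$ (valid since $\tilde Q\ge 0$). The saddle inequality $L\hat V_{n-1}(x,\gamma,\mu,\nu_n^*)\le \hat V_n(x,\gamma)$ for all $\mu$ gives $T_{fg_n^*}\hat V_{n-1}\le \hat V_n$ for every $f$; iterating with the correct $\gamma\beta^k$ bookkeeping yields $V_{N\pi\sigma^*}\le \hat V_N$ for all $\pi$, and the symmetric inequality gives $V_{N\pi^*\sigma}\ge \hat V_N$ for all $\sigma$. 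Together these establish $V_{N\pi\sigma^*}\le \hat V_N\le V_{N\pi^*\sigma}$ with equality at $(\pi^*,\sigma^*)$, so $\hat V_N$ is the value of the game, it equals the upper value of \eqref{eq:uppervalue}, the Shapley equation $V_n=TV_{n-1}$ of part (a) holds, and $(\pi^*,\sigma^*)$ are optimal.
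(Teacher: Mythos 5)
Your proposal is correct and follows essentially the same route as the paper: an induction in which each stage combines Fan's minimax theorem with a measurable selection argument to produce the saddle point $(\mu_n^*,\nu_n^*)$ and preserve membership in $\mathcal{C}$, followed by a monotonicity-based chaining of the saddle inequalities (via Theorem~\ref{theo:iteration} and the identification $f_k(x)=\mu_{n-k}(x,\gamma\beta^k)$) to get $V_{N\pi\sigma^*}\le V_N\le V_{N\pi^*\sigma}$. Your explicit justification of why the minimax theorem applies — bilinearity of $(\mu,\nu)\mapsto LV_{n-1}$ afforded by the $\tilde Q$-transformation — is a welcome elaboration of a step the paper delegates to the references \cite{fan53}, \cite{bp73} and \cite{ri78}.
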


\begin{proof}
First we define for all admissible $(\mu,\nu): \mathbb{E}\times (0,\bar{\gamma}] \to  \mathbb{P}(\mathbb{A})\times\mathbb{P}(\mathbb{B})$ and functions $v\in \mathcal{C}$ the operator
$$ (T_{\mu\nu} v) (x,\gamma):= (Lv)\big(x,\gamma,\mu(x,\gamma),\nu(x,\gamma)\big).$$
Also note that for $v\in\mathcal{C}$ it holds that there exists a (measurable) saddle point $(\mu^*,\nu^*)$ such that
$$ Lv\big(x,\gamma,\mu(x,\gamma),\nu_n^*(x,\gamma)\big) \le Lv\big(x,\gamma,\mu_n^*(x,\gamma),\nu_n^*(x,\gamma)\big)\le Lv\big(x,\gamma,\mu^*(x,\gamma),\nu(x,\gamma)\big)$$
or equivalently
$$ (T_{\mu\nu^*}v)(x,\gamma) \le (T_{\mu^*\nu^*}v)(x,\gamma) \le (T_{\mu^*\nu}v)(x,\gamma) $$
for all $(\mu,\nu) $ and that $(x,\gamma)\mapsto Lv\big(x,\gamma,\mu^*(x,\gamma),\nu^*(x,\gamma)\big)\in\mathcal{C}$.
This follows from a classical measurable selection theorem in \cite{bp73} and a minimax theorem (Theorem 2) in \cite{fan53}, see e.g. \cite{ri78}.

By induction on $n$ we show that
\begin{itemize}
  \item[(i)] $V_n = TV_{n-1}\in\mathcal{C},$
  \item[(ii)] $T_{\mu_n\nu_n^*}\ldots T_{\mu_1\nu_1^*} V_0 \le V_n$ for any measurable $\mu_1,\ldots ,\mu_n : \mathbb{E}\times (0,\bar{\gamma}] \to  \mathbb{P}(\mathbb{A})$,
  \item[(iii)] $T_{\mu_n^*\nu_n^*}\ldots T_{\mu_1^*\nu_1^*} V_0 = V_n,$
  \item[(iv)] $T_{\mu_n^*\nu_n}\ldots T_{\mu_1^*\nu_1} V_0 \ge  V_n$ for any measurable $\nu_1,\ldots ,\nu_n : \mathbb{E}\times (0,\bar{\gamma}] \to  \mathbb{P}(\mathbb{B})$,
 \end{itemize}
and finally that $T_{\mu_N^*\nu_N^*}\ldots T_{\mu_1^*\nu_1^*} V_0 = T_{f_0^*g_0^*}\ldots T_{f_{N-1}^*g_{N-1}^*} V_0 = V_{N\pi^*\sigma^*}.$

For $n=1$ we obtain by definition of $V_1$ and Theorem \ref{theo:iteration} that
$$ V_1 = \inf_g \sup_f T_{fg} V_0 = TV_0.$$
The remaining statements (ii)-(iv) follow directly from the definition of $\mu_1^*$, $\nu_1^*$ and $V_1$.
%Due to the definition of $\mu_1^*$ and $\nu_1^*$ we obtain
%\begin{eqnarray*}
%% \nonumber to remove numbering (before each equation)
%  TV_0(x,\gamma) &=& \inf_g \sup_f T_{fg}V_0(x,\gamma) = \inf_\nu\sup_\mu (LV_0)(x,\gamma,\mu,\nu) \\
%   &\le &  \sup_\mu (LV_0)(x,\gamma,\mu,\nu^*(x,\gamma)) \le  (LV_0)(x,\gamma,\mu^*(x,\gamma),\nu^*(x,\gamma))\\
%   &\le & \inf_\nu  (LV_0)(x,\gamma,\mu^*(x,\gamma),\nu) \le \sup_\mu\inf_\nu (LV_0)(x,\gamma,\mu,\nu).
%\end{eqnarray*}
Now suppose the statement is true for $n-1$.  Obviously the $T_{\mu\nu}$-operator is monotone, i.e.\ for $v,w\in\mathcal{C}$ with
$v\le w$ we obtain $T_{\mu\nu} v \le T_{\mu\nu} w$. Since $V_{n-1}\in \mathcal{C}$, the selection theorem and the minimax theorem imply the existence of a saddle point $(\mu_n^*,\nu_n^*)$ on stage $n$.
With the induction hypothesis we obtain
\begin{eqnarray*}
% \nonumber to remove numbering (before each equation)
  T_{\mu_n\nu_n^*}\ldots T_{\mu_1\nu_1^*} V_0 &\le& T_{\mu_n\nu_n^*} V_{n-1} \le  T_{\mu_n^*\nu_n^*} V_{n-1} =  T_{\mu_n^*\nu_n^*}\ldots T_{\mu_1^*\nu_1^*} V_0
\end{eqnarray*}
for any $\mu_1,\ldots ,\mu_n$.
On the other hand
\begin{eqnarray*}
% \nonumber to remove numbering (before each equation)
  T_{\mu_n^*\nu_n^*} V_{n-1} &\le & T_{\mu_n^*\nu_n} V_{n-1}\le  T_{\mu_n^*\nu_n}\ldots T_{\mu_1^*\nu_1} V_0.
\end{eqnarray*}
for any  $\nu_1,\ldots ,\nu_n$. Moreover since we can identify a sequence $(\mu_1,\ldots ,\mu_n)$ with a policy ${\pi} =(f_0,\ldots,f_{n-1})$ by setting $f_k(x) := \mu_{n-k}(x,\gamma\beta^k)$ for $k=0,\ldots,n-1$ and similarly for ${\pi}^*,{\sigma},{\sigma}^*,$ we have shown that there exist policies ${\pi}^*$ and ${\sigma}^*$ for the $n$-stage game such that for any arbitrary policies ${\pi},{\sigma}$
$$ V_{n{\pi}{\sigma}^*}\le V_{n{\pi}^*{\sigma}^*} \le V_{n{\pi}^*{\sigma}}.$$
But this means that $({\pi}^*,{\sigma}^*)$ is a saddle-point and $V_n=V_{n{\pi}^*{\sigma}^*} = T_{\mu_n^*\nu_n^*}\ldots T_{\mu_1^*\nu_1^*} V_0$ is the value of the game and $\pi^*$ and $\sigma^*$ are optimal for player 1 and 2 respectively.

Combining above results, the statement follows.
\end{proof}

\begin{remark}\label{rem:beta1}
If $\beta=1$ we can skip the second component and simply write the Shapley equation as $V_n(x) =(TV_{n-1})(x) = \sup_\mu \inf_\nu \int V_{n-1}(x')\tilde{Q}(dx'|x,\mu,\nu)$.
\end{remark}

\begin{remark}\label{rem:CE}
With the algorithm in Theorem \ref{theo:finite} we finally compute the value $V_N(x,\gamma)$ in \eqref{eq:uppervalue}. However, the main interest is the certainty equivalent in \eqref{eq:CE}. Of course by setting $\tilde{V}_N(x,\gamma) := \frac1\gamma \ln V_N(x,\gamma)$ we obtain this value and we can easily modify the Shapley equation to obtain an equation for the certainty equivalent. Indeed it holds that
$$ \tilde{V}_n(x,\gamma) = \sup_\mu\inf_\nu \frac1\gamma \ln \int e^{\gamma \tilde{V}_{n-1}(x',\beta\gamma)} \tilde{Q}(dx'|x,\gamma,\mu,\nu).$$
\end{remark}

\begin{remark}
We have stated our results in the framework of $\gamma>0$, but the same arguments and statements also hold true in case $\gamma<0$. One only has to consider the state space $\mathbb{E}\times[\underline{\gamma},0)$.
\end{remark}

\begin{example}
In order to study the effect of the risk aversion parameter $\gamma$, we consider a simple one-period game where players have only two actions, i.e. $\mathbb{A}=\mathbb{B}=\{-1,+1\}$. In this situation we do not need a state and can skip $x$.
The risk-sensitive reward for player 1 under policies $\pi=(f_0):=(1-y,y)$ and $\sigma=(g_0):=(1-z,z)$ is then given by
\begin{eqnarray*} V_{1\pi\sigma}(\gamma) &:=& \Eop^{\pi\sigma}\Big[ \exp\Big( \gamma r(A_0,B_0)\Big) \Big]\\
& =& e^{\gamma r(-1,-1)}(1-y)(1-z)+ e^{\gamma r(-1,1)} (1-y)z+e^{\gamma r(1,-1)} y(1-z)+e^{\gamma r(1,1)}yz. \end{eqnarray*}
We assume that $\min\{r(-1,-1),r(1,1)\} > \max\{r(-1,1),r(1,-1)\}$. The general solutions for the optimal strategies for player 1 and 2 are:
\begin{eqnarray*}
    y^*&=& \frac{e^{\gamma r(-1,1)}-e^{\gamma r(-1,-1)}}{e^{\gamma r(-1,1)}+e^{\gamma r(1,-1)} -e^{\gamma r(-1,-1)}-e^{\gamma r(1,1)}},\\
    z^*&=& \frac{e^{\gamma r(1,-1)}-e^{\gamma r(-1,-1)}}{e^{\gamma r(-1,1)}+e^{\gamma r(1,-1)} -e^{\gamma r(-1,-1)}-e^{\gamma r(1,1)}}.
\end{eqnarray*}
These solutions depend on $\gamma\neq 0$. In our numerical example we have chosen $$r(-1,-1)=5, r(-1,1)=3, r(1,-1)=2, r(1,1)=4.$$ Figure \ref{fig:w} shows the optimal (randomized) strategies as functions of $\gamma$. The case $\gamma=0$ corresponds to the risk-neutral game with optimal strategies $y^*=0.5$ and $z^*=0.75$.
As we can see from Figure \ref{fig:w}, but can also be shown analytically, is that $y(\gamma) \to 1, z(\gamma)\to 1$ for $\gamma\to\infty$, i.e.\ in this case the players asymptotically use pure strategies and agree on the best value for player 1. For $\gamma\to-\infty$ we obtain $y(\gamma)\to 0, z(\gamma)\to 1$, i.e.\ again both players use asymptotically pure strategies and this time agree on the worst value for player 1. Intuitively this behavior is clear, since the randomization of the strategies is the only stochastic component in this model and whereas it is optimal to use randomized strategies in the risk-neutral case ($\gamma=0$) players tend to reduce their own produced variability if $|\gamma|\to\infty$. For the special model with countable state space in \cite{bg14}, the authors have shown in Section 5 that this observation is also true for the risk-sensitive average reward case. The behavior of the optimal strategies in the discounted reward case is still open.

\begin{figure}
  %  \centering
      \hspace*{-0.5cm}  \includegraphics[trim = 10mm 30mm 10mm 30mm, height=0.50\textwidth]{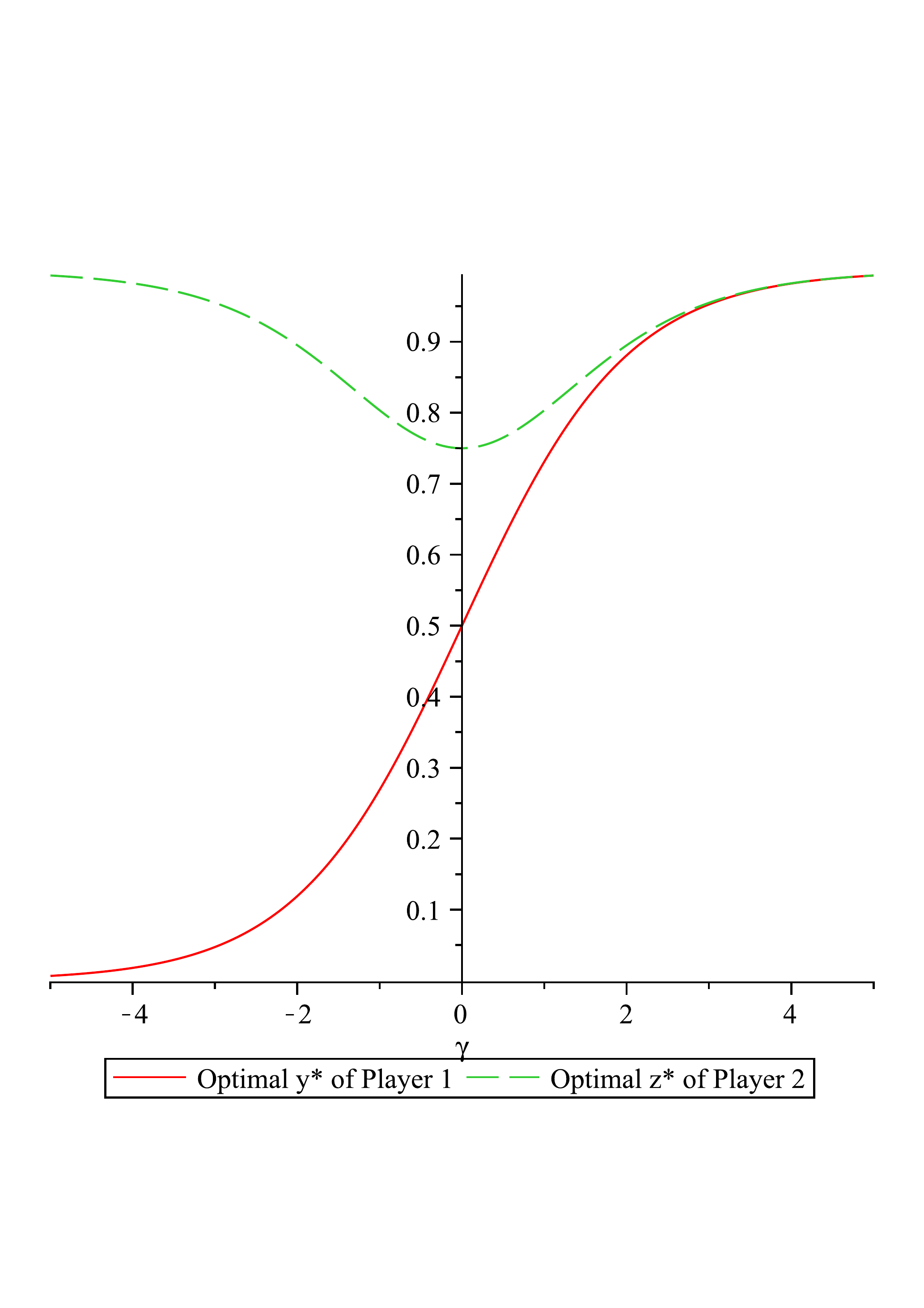}\hspace*{1cm}
      \includegraphics[trim = 10mm 30mm 10mm 30mm, height=0.5\textwidth]{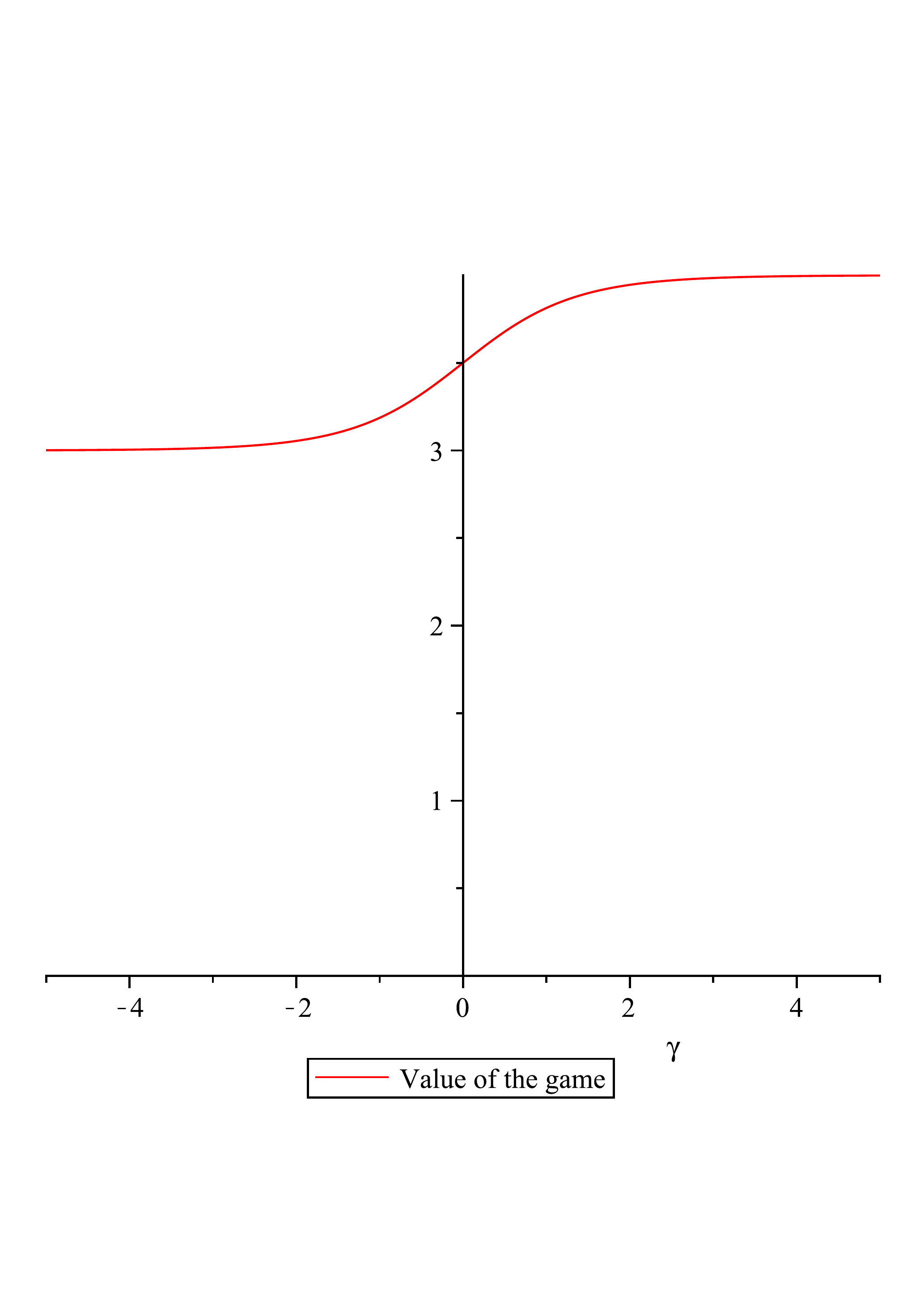}\hspace*{1cm}
    \caption{Optimal strategies for both players as a function of $\gamma\neq0$ (left) and value of the game as a function of $\gamma\neq0$ (right).}
    \label{fig:w}
\end{figure}

\end{example}

\section{Infinite Horizon Discounted Game}\label{sec:infh}
In this section we consider the game with an infinite time horizon and $\beta\in(0,1)$. The risk-sensitive reward for player 1 under the policies $\pi$ and $\sigma$ is here given by
\begin{equation} V_{\infty\pi\sigma}(x,\gamma) := \Eop_x^{\pi\sigma}\Big[ \exp\Big( \gamma \sum_{k=0}^{\infty} \beta^k r(X_k,A_k,B_k)\Big) \Big].\end{equation}
The corresponding {\em upper value} of the game is
\begin{equation} V_{\infty}(x,\gamma) := \inf_\sigma \sup_\pi V_{\infty\pi\sigma}(x,\gamma). \end{equation}
If $\inf_\sigma \sup_\pi V_{\infty\pi\sigma}(x,\gamma) = \sup_\pi  \inf_\sigma V_{\infty\pi\sigma}(x,\gamma)$, the function $V_\infty$ is called {\em value function} of the stochastic game with infinite time horizon.
A policy $\pi^*$ is called {\em optimal for player 1} in this case if
\begin{equation*} V_{\infty\pi^*\sigma}(x,\gamma) \ge \sup_\pi \inf_\sigma  V_{\infty\pi\sigma}(x,\gamma),\; \mbox{ for all } \sigma. \end{equation*}
 A policy $\sigma^*$ is called {\em optimal for player 2}  if
\begin{equation*} V_{\infty\pi\sigma^*}(x,\gamma) \le  \inf_\sigma \sup_\pi V_{\infty\pi\sigma}(x,\gamma),\; \mbox{ for all } \pi. \end{equation*}
The policy pair $(\pi^*,\sigma^*)$ for which
\begin{equation*}%\label{eq:saddlepoint}
    V_{\infty\pi\sigma^*} \le V_{\infty\pi^*\sigma^*} \le V_{\infty\pi^*\sigma},\quad\mbox{for all } \pi,\sigma
\end{equation*}
is called {\em saddle-point equilibrium} and implies that $\pi^*$ is optimal for player 1 and $\sigma^*$ is optimal for player 2.

\subsection{Positive risk-sensitivity $\gamma>0$}
Here we assume that $\gamma\in(0,\bar{\gamma}]$ and denote the upper bound of $r$ by $\bar{r}$. The next theorem provides the solution of the infinite horizon problem. Recall that $\mathcal{C}$ is the set of all continuous and bounded functions on $\mathbb{E}\times (0,\bar{\gamma}]$.

\begin{theorem}\label{theo:posra}
Assume (A1)-(A4) and let $\gamma\in(0,\bar{\gamma}]$.
\begin{itemize}
  \item[a)] It holds that $V_\infty$ is the unique solution $v\in\mathcal{C}$ of the Shapley equation $$V_\infty(x,\gamma) =  TV_\infty(x,\gamma)=  \inf_\nu \sup_\mu \int V_{\infty}(x',\beta \gamma) \tilde{Q}(dx'|x,\gamma,\mu,\nu)$$
  with $1 \le v(x,\gamma)\le e^{\gamma  \frac{\bar{r}}{1-\beta}}$.
  \item[b)]  There exist measurable functions  $(\mu^*,\nu^*):\mathbb{E}\times (0,\bar{\gamma}]\to \mathbb{P}(\mathbb{A})\times \mathbb{P}(\mathbb{B}) $ with $\mu^*(x,\gamma)\in \mathbb{P}(\mathbb{A}_x)$ and $\nu^*(x,\gamma)\in \mathbb{P}(\mathbb{B}_x)$ such that $(\mu^*(x,\gamma),\nu^*(x,\gamma))$ is a saddle point of
      $$ (\mu,\nu) \mapsto LV_{\infty}\big(x,\gamma,\mu,\nu\big)$$
%        \begin{eqnarray*} &&LV_{\infty}\big(x,\gamma,\mu,\nu^*(x,\gamma)\big) \le LV_{\infty}\big(x,\gamma,\mu^*(x,\gamma),\nu^*(x,\gamma)\big)\le LV_{\infty}\big(x,\gamma,\mu^*(x,\gamma),\nu\big),%\\
  % &&\hspace*{8.5cm}\mbox{for all } (\mu,\nu) \in  \mathbb{P}(\mathbb{A}_x)\times\mathbb{P}(\mathbb{B}_x).
%  \end{eqnarray*}
   for all $ (\mu,\nu) \in  \mathbb{P}(\mathbb{A}_x)\times\mathbb{P}(\mathbb{B}_x)$. Then $V_\infty$ is the value of the infinite horizon stochastic game and $(\pi^*,\sigma^*)=({f}_n^*,{g}_n^*)_{n=0,\ldots,N-1}$ with ${f}_n^*(x) := \mu^*(x,\gamma\beta^n)$ and  ${g}_{n}^*(x) := \nu^*(x,\gamma\beta^n)$ are optimal policies for player 1 and 2 respectively.
\end{itemize}
\end{theorem}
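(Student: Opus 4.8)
The plan is to obtain $V_\infty$ as a uniform limit of the finite-horizon values $V_n=T^nV_0$ and then transfer the finite-horizon results of Theorem \ref{theo:finite} to the limit. First I would record the a priori bounds: since $0\le r\le\bar r$ and $\gamma>0$, the kernel $\tilde Q(\cdot|x,\gamma,\mu,\nu)$ has total mass between $1$ and $e^{\gamma\bar r}$, which together with the discounting gives $1\le V_{n\pi\sigma}\le e^{\gamma\bar r/(1-\beta)}$ for every pair of policies, and (by monotonicity of $T$ and $V_0=1\le TV_0$) that $V_n$ is nondecreasing in $n$.

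The decisive estimate is a uniform tail bound. Splitting $R_\infty=R_n+\sum_{k\ge n}\beta^k r(X_k,A_k,B_k)$ and using $\sum_{k\ge n}\beta^k r\le\bar r\beta^n/(1-\beta)$, I obtain for all $x,\pi,\sigma$ that
$$0\le V_{\infty\pi\sigma}(x,\gamma)-V_{n\pi\sigma}(x,\gamma)\le\big(e^{\gamma\bar r\beta^n/(1-\beta)}-1\big)e^{\gamma\bar r/(1-\beta)}=:C_n,$$
with $C_n\to 0$ independently of $x,\pi,\sigma$. Because $|\inf_\sigma\sup_\pi f-\inf_\sigma\sup_\pi g|\le\sup|f-g|$, this uniform estimate shows that both the upper value $\inf_\sigma\sup_\pi V_{\infty\pi\sigma}$ and the lower value $\sup_\pi\inf_\sigma V_{\infty\pi\sigma}$ lie within $C_n$ of the common finite-horizon value $V_n$ (which is a value by Theorem \ref{theo:finite}); letting $n\to\infty$ proves that the game has a value and $V_\infty=\lim_n V_n$. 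Uniform convergence of $V_n\in\mathcal C$ yields $V_\infty\in\mathcal C$, and since $|(Tv)(x,\gamma)-(Tw)(x,\gamma)|\le e^{\gamma\bar r}\,\|v-w\|_\infty$ I may pass to the limit in $V_n=TV_{n-1}$ to obtain the Shapley equation $V_\infty=TV_\infty$.

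For uniqueness I would exploit that $T$ contracts through the $\gamma$-coordinate rather than through a mass factor. If $v,w\in\mathcal C$ are two fixed points with values in $[1,e^{\gamma\bar r/(1-\beta)}]$, then $h(\gamma):=\sup_x|v(x,\gamma)-w(x,\gamma)|$ satisfies $h(\gamma)\le e^{\gamma\bar r}h(\beta\gamma)$, and iterating gives $h(\gamma)\le e^{\gamma\bar r/(1-\beta)}\,h(\beta^n\gamma)$. Since $h(\beta^n\gamma)\le e^{\gamma\beta^n\bar r/(1-\beta)}-1\to 0$, we conclude $h\equiv 0$. The existence of measurable saddle selectors $(\mu^*,\nu^*)$ for $(\mu,\nu)\mapsto LV_\infty(x,\gamma,\mu,\nu)$ then follows, exactly as in Theorem \ref{theo:finite}, from the selection theorem in \cite{bp73} together with Fan's minimax theorem \cite{fan53}, using $V_\infty\in\mathcal C$.

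Finally, for optimality I would run a verification argument. The key point is that the design $f_n^*(x)=\mu^*(x,\gamma\beta^n)$, $g_n^*(x)=\nu^*(x,\gamma\beta^n)$ matches the selector's second argument to the coordinate $\gamma\beta^n$ produced after $n$ applications of the operators. Using the saddle inequality $LV_\infty(x,\tilde\gamma,\mu^*(x,\tilde\gamma),\nu)\ge V_\infty(x,\tilde\gamma)$ at each stage, an induction over the $N$ layers shows $T_{f_0^*g_0}\cdots T_{f_{N-1}^*g_{N-1}}V_\infty\ge V_\infty$ for every $\sigma=(g_n)$; combining this with the tail estimate (which bounds the difference between applying the composition to $V_\infty$ and to $V_0$, i.e.\ to $V_{N\pi^*\sigma}$) and with $V_{N\pi^*\sigma}\le V_{\infty\pi^*\sigma}$ gives $V_{\infty\pi^*\sigma}\ge V_\infty$ as $N\to\infty$. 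The symmetric argument with the reversed saddle inequality yields $V_{\infty\pi\sigma^*}\le V_\infty$, and the two inequalities identify $(\pi^*,\sigma^*)$ as a saddle point with $V_{\infty\pi^*\sigma^*}=V_\infty$. I expect the main obstacle to be precisely the absence of an ordinary sup-norm contraction: the argument hinges on extracting contractivity from the collapse $\gamma\mapsto\beta^n\gamma\to 0$ together with the normalization $V\to 1$ as $\gamma\to 0$, and on keeping the bookkeeping of the shrinking second coordinate consistent throughout the verification step.
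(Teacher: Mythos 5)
Your proposal is correct and follows essentially the same route as the paper: monotone convergence of $V_n=T^nV_0$, the uniform tail estimate $\delta_n=\exp(\gamma\beta^n\bar r/(1-\beta))$ to identify the limit with $V_\infty$ and get $V_\infty\in\mathcal C$ and $V_\infty=TV_\infty$, then measurable selection plus Fan's minimax theorem and iteration of the saddle inequality for part b). Your uniqueness step (iterating $h(\gamma)\le e^{\gamma\bar r}h(\beta\gamma)$) is packaged slightly differently from the paper's squeeze between $T^n\underline b\uparrow V_\infty$ and $T^n\bar b\downarrow V_\infty$, but both rest on the same collapse of the bounds to $1$ as the second coordinate $\beta^n\gamma\to 0$, so the arguments are equivalent in substance.
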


\begin{proof}
\begin{itemize}
  \item[a)]
Obviously the values $V_n=T^nV_0$ of the $n$-stage games are increasing and bounded and hence a limit $V:= \lim_{n\to\infty} V_n$ exists. We first prove that $V_\infty=V$. By monotonicity we see directly that $V_n\le V_\infty$ which implies that $V\le V_\infty$. On the other hand we have for arbitrary policies $\pi$ and $\sigma$ that $V_{\infty \pi\sigma}(x,\gamma) \le V_{n\pi\sigma}(x,\gamma) \cdot \delta_n$ with $\delta_n := \exp\Big(\gamma\beta^n \frac{\bar{r}}{1-\beta}\Big)$ and $\lim_{n\to\infty}\delta_n=1$. Thus we obtain
$$ V_\infty(x,\gamma)=\inf_\sigma\sup_\pi V_{\infty \pi\sigma}(x,\gamma) \le \inf_\sigma\sup_\pi V_{n\pi\sigma}(x,\gamma) \cdot \delta_n = V_n(x,\gamma) \cdot \delta_n$$
which yields $V_\infty \le V$ and thus equality. Note that the convergence $\lim_{n\to\infty}V_n=V_\infty$ is uniform because of
$$ 0\le V_\infty(x,\gamma)-V_n(x,\gamma) \le V_n(x,\gamma) \cdot(\delta_n-1 )\le e^{\bar{\gamma} \frac{\bar{r}}{1-\beta}} \cdot\Big( \exp\Big(\bar{\gamma}\beta^n \frac{\bar{r}}{1-\beta}\Big) -1 \Big).$$
The right hand side converges to $0$ for $n\to\infty$ and does not depend on $x$ and $\gamma$ any more. Uniform convergence implies that $V_\infty\in\mathcal{C}$ again.

Next we show that $V_\infty = TV_\infty$. Since $V_n\le V_\infty$ we obtain by applying the $T$-operator to this inequality that $V_{n+1}=TV_n \le TV_\infty$ and by letting $n\to\infty$ that $V_\infty=V \le  TV_\infty$. On the other hand we have $V_\infty \le V_n\cdot \delta_n $ and again by applying the $T$-operator to this inequality we obtain $TV_\infty \le T(V_n\cdot \delta_n) = \delta_n\cdot TV_n = \delta_n \cdot V_{n+1}$. Thus by letting $n\to\infty$ we obtain $TV_\infty \le V_\infty$ and hence $V_\infty=TV_\infty$.

Finally we show the uniqueness of the solution of the Shapley equation in the set of continuous functions with lower bound $\underline{b}:= 1$ and upper bound $\bar{b}(\gamma) := e^{\gamma  \frac{\bar{r}}{1-\beta}}$. The first step is to see that $T\underline{b} \ge \underline{b}$ because $\tilde{Q}$ is not necessarily a probability measure but has a total mass larger or equal to $1$. On the other hand we have
$$ T\bar{b}(x,\gamma) = \inf_\nu\sup_\mu \int \int \bar{b}(\beta\gamma) e^{\gamma r(x,a,b)} \mu(da) \nu(db) \le e^{\gamma\bar{r}} e^{\gamma\beta\frac{\bar{r}}{1-\beta}}=\bar{b}(\gamma).$$
Since the $T$-operator is monotone we have that $T^n\underline{b} \uparrow$ and $T^n\bar{b}\downarrow$ for $n\to\infty$. Obviously we obtain
\begin{eqnarray*}
% \nonumber to remove numbering (before each equation)
  T^n\underline{b}(x,\gamma) &=& V_n(x,\gamma) = \inf_\sigma\sup_\pi  \Eop_x^{\pi\sigma}\Big[ \exp\Big( \gamma \sum_{k=0}^{n-1} \beta^k r(X_k,A_k,B_k)\Big) \Big] \\
  T^n\bar{b}(x,\gamma) &=& \delta_n V_n(x,\gamma)=\delta_n  T^n\underline{b}(x,\gamma).
\end{eqnarray*}
Hence we see that  $T^n\underline{b} \uparrow V_\infty$ and $T^n\bar{b}\downarrow V_\infty$ for $n\to\infty$. Now suppose that there is another solution $v\in\mathcal{C}$ of $v=Tv$ with $\underline{b}\le v\le \bar{b}$. This then implies that $T^n\underline{b}  \le v\le T^n\bar{b}$ for all $n$ which shows uniqueness of the fixed point.
\item[b)] The existence of a saddle point $(\mu^*,\nu^*)$ follows again from the measurable selection theorem and the minimax theorem. Let $T_{\mu\nu} v$ be defined as in Theorem \ref{theo:finite}. By monotonicity and the fact that $1\le V_\infty(x,\gamma) \le e^{\gamma  \frac{\bar{r}}{1-\beta}}$ we obtain that $\lim_{n\to\infty}T^n_{\mu^*\nu^*} V_0 = \lim_{n\to\infty}T^n_{\mu^*\nu^*} V_\infty= V_{\infty\pi^*\sigma^*}$ for $\pi^*,\sigma^*$ defined in the statement. By the definition of the saddle point we obtain for any admissible $(\mu_1,\nu_1) : \mathbb{E}\times (0,\bar{\gamma}] \to  \mathbb{P}(\mathbb{A})\times \mathbb{P}(\mathbb{B}) $ that
    $$ T_{\mu_1\nu^*} V_\infty\le T_{\mu^*\nu^*} V_\infty \le T_{\mu^*\nu_1} V_\infty.$$
As usual the saddle point property implies that $\sup_\mu\inf_\nu T_{\mu\nu} V_\infty = \inf_\nu\sup_\mu T_{\mu\nu} V_\infty = TV_\infty = V_\infty$ and $T_{\mu^*\nu^*} V_\infty=V_\infty$. Hence we can also write
 $$ T_{\mu_1\nu^*} V_\infty\le V_\infty \le T_{\mu^*\nu_1} V_\infty.$$
 By iterating this inequality $n$-times we end up with
  $$ T_{\mu_1\nu^*}\ldots T_{\mu_n\nu^*}  V_\infty\le  V_\infty \le T_{\mu^*\nu_1}\ldots T_{\mu^*\nu_n} V_\infty$$
  for arbitrary $\mu_1,\ldots \mu_n$ and $\nu_1,\ldots,\nu_n$. Letting $n\to\infty$ implies since $1\le V_\infty(x,\gamma) \le e^{\gamma  \frac{\bar{r}}{1-\beta}}$ that $\lim_{n\to\infty} T_{\mu^*\nu_1}\ldots T_{\mu^*\nu_n} V_\infty = \lim_{n\to\infty} T_{\mu^*\nu_1}\ldots T_{\mu^*\nu_n} V_0=V_{\infty\pi^*\sigma}$ for $\sigma=(g_1,g_2,\ldots)$ with $g_n(x)=\nu_n(x,\gamma\beta^n)$. The same is true on the left-hand side. In total we obtain that$$ V_{\infty\pi\sigma^*} \le V_{\infty\pi^*\sigma^*} = V_\infty\le V_{\infty\pi^*\sigma}$$ for all policies $\pi$ and $\sigma$ which yields the statement.
\end{itemize}

\end{proof}

\subsection{Negative risk-sensitivity $\gamma<0$}
Here we assume that $\gamma\in[\bar{\gamma},0)$. The next theorem provides the solution of the infinite horizon problem.

\begin{theorem}
Let $\gamma\in[\bar{\gamma},0)$. Theorem \ref{theo:posra} holds as before with bounds $e^{\gamma  \frac{\bar{r}}{1-\beta}} \le v(x,\gamma)\le 1$ in part a).
\end{theorem}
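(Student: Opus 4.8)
The plan is to mirror the proof of Theorem \ref{theo:posra} line by line, the only structural change being that for $\gamma<0$ the modified kernel $\tilde{Q}$ in \eqref{eq:tildeQ} has total mass \emph{at most} one, because $r\ge 0$ forces $e^{\gamma r(x,a,b)}\le 1$. This single fact reverses essentially every monotonicity and inequality direction in the original argument, so the proof is symmetric with the roles of the lower bound $\underline{b}$ and the upper bound $\bar{b}$ interchanged. Since the finite-horizon results (Theorems \ref{theo:iteration} and \ref{theo:finite}) were already observed to hold for $\gamma<0$ on the state space $\mathbb{E}\times[\bar{\gamma},0)$, I may again write $V_n=T^nV_0$ with $V_0\equiv 1$, and the operator $T$ is still monotone.

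First I would show that $(V_n)$ is now \emph{decreasing}: since $R_n\uparrow$ pathwise and $\gamma<0$, the integrand $e^{\gamma R_n}$ decreases pathwise, whence $V_{n\pi\sigma}$ decreases for fixed policies and, after taking $\inf_\sigma\sup_\pi$, so does $V_n$. The sequence is bounded below by $e^{\gamma\bar r/(1-\beta)}$ because $R_n\le\bar r/(1-\beta)$, so $V:=\lim_n V_n$ exists. To identify $V=V_\infty$ I would use, in place of the estimate in Theorem \ref{theo:posra}, the reversed inequality $V_{\infty\pi\sigma}(x,\gamma)\ge V_{n\pi\sigma}(x,\gamma)\,\delta_n$ with $\delta_n:=\exp(\gamma\beta^n\bar r/(1-\beta))$, which now satisfies $\delta_n\le 1$ and $\delta_n\uparrow 1$; multiplying by the positive constant $\delta_n$ commutes with $\inf_\sigma\sup_\pi$ and yields $V_\infty\ge\delta_n V_n$, hence $V_\infty\ge V$, while pathwise monotonicity gives $V_n\ge V_\infty$ and thus $V\ge V_\infty$. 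Uniform convergence then follows from $0\le V_n-V_\infty\le V_n(1-\delta_n)\le 1-\exp(\bar\gamma\beta^n\bar r/(1-\beta))$, whose right-hand side is independent of $(x,\gamma)$ and tends to $0$; therefore $V_\infty\in\mathcal{C}$. The Shapley equation $V_\infty=TV_\infty$ is obtained exactly as before from the monotonicity of $T$ applied to the two-sided squeeze $\delta_n V_n\le V_\infty\le V_n$.

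For uniqueness I would set $\underline{b}(\gamma):=e^{\gamma\bar r/(1-\beta)}$ and $\bar{b}\equiv 1=V_0$. A short computation, using $e^{\gamma r}\ge e^{\gamma\bar r}$, gives $T\underline{b}\ge\underline{b}$ and $T\bar{b}\le\bar{b}$, so $T^n\underline{b}\uparrow$ and $T^n\bar{b}\downarrow$. One checks by induction, factoring out the relevant constant at each application of $T$, that $T^n\bar{b}=V_n$ and $T^n\underline{b}=\delta_n V_n$, so both converge to $V_\infty$; any $v\in\mathcal{C}$ with $v=Tv$ and $\underline{b}\le v\le\bar{b}$ is squeezed between them and must equal $V_\infty$. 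Part b) is then verbatim the argument in Theorem \ref{theo:posra}: a measurable saddle point of $(\mu,\nu)\mapsto LV_\infty(x,\gamma,\mu,\nu)$ exists by the selection and minimax theorems, and iterating the saddle-point inequalities together with the bounds $e^{\gamma\bar r/(1-\beta)}\le V_\infty\le 1$ (which replace $1\le V_\infty\le e^{\gamma\bar r/(1-\beta)}$) shows $\lim_n T^n_{\mu^*\nu^*}V_0=V_{\infty\pi^*\sigma^*}$ and optimality of the induced stationary policies. The only point requiring care — and the main obstacle — is the consistent reversal of all inequality directions, in particular verifying $T\underline{b}\ge\underline{b}$ and the reversed comparison $V_{\infty\pi\sigma}\ge\delta_n V_{n\pi\sigma}$; once the signs are tracked correctly, no genuinely new estimate is needed.
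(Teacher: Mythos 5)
Your proposal is correct and follows essentially the same route as the paper, whose own proof consists only of the remark that the inequalities reverse and that $V_{\infty\pi\sigma}(x,\gamma)\ge V_{n\pi\sigma}(x,\gamma)\cdot\delta_n$; you have simply carried out that reversal in full detail (decreasing $V_n$, the swapped roles of $\underline{b}$ and $\bar{b}$, $T^n\bar{b}=V_n$ and $T^n\underline{b}=\delta_n V_n$), and all the sign-tracking checks out.
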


\begin{proof}
Note that most inequalities simply reverse. In particular we obtain $V_\infty(x,\gamma) \ge V_n(x,\gamma)\cdot\delta_n$ with $\delta_n$ as in the previous proof.

\end{proof}

\section{Ergodic Game}\label{sec:arg}
In this section we consider the risk-sensitive average reward with $\beta=1$. Following Remark \ref{rem:beta1} we now skip the dependence on $\gamma$ in our notations. Hence for two randomized policies $\pi$ and $\sigma$ for player 1 and 2 respectively we define
\begin{eqnarray}\label{eq:AC}
    J_{\pi\sigma}(x) &:= &\limsup_{n\to\infty} \frac1n \frac1\gamma \ln \Eop_x^{\pi\sigma}\Big[\exp(\gamma \sum_{k=0}^{n-1} r(X_k,A_k,B_k))\Big] \\
    J(x) &:=& \inf_\sigma\sup_\pi J_{\pi\sigma}(x).
\end{eqnarray}
In case we have stationary policies, i.e.\ $\pi=(f,f,\ldots)$ and $\sigma=(g,g,\ldots)$ we simply write $J_{fg}(x)$.

If $\inf_\sigma \sup_\pi J_{\pi\sigma}(x) = \sup_\pi  \inf_\sigma J_{\pi\sigma}(x)$, the function $J$ is called {\em value function} of the average reward stochastic game.
A policy $\pi^*$ is called {\em optimal for player 1} in this case if
\begin{equation*} J_{\pi^*\sigma}(x) \ge \sup_\pi \inf_\sigma  J_{\pi\sigma}(x),\; \mbox{ for all } \sigma. \end{equation*}
 A policy $\sigma^*$ is called {\em optimal for player 2}  if
\begin{equation*} J_{\pi\sigma^*}(x) \le  \inf_\sigma \sup_\pi J_{\pi\sigma}(x),\; \mbox{ for all } \pi. \end{equation*}
The policy pair $(\pi^*,\sigma^*)$ for which
\begin{equation*}%\label{eq:saddlepoint}
    J_{\pi\sigma^*} \le J_{\pi^*\sigma^*} \le J_{\pi^*\sigma},\quad\mbox{for all } \pi,\sigma
\end{equation*}
is called {\em saddle-point equilibrium} and implies that $\pi^*$ is average optimal for player 1 and $\sigma^*$ is average optimal for player 2.

Here we cannot ignore the logarithm (see Remark \eqref{rem:CE}) and define for $f\in\mathbb{F}, g\in \mathbb{G}$, $\mu\in\mathbb{P}(\mathbb{A}_x), \nu\in \mathbb{P}(\mathbb{B}_x)$ and a measurable function $v:\mathbb{E}\to\R$ the following operators, given the expectation exists:
\begin{eqnarray*}
% \nonumber to remove numbering (before each equation)
  (U_{fg}v)(x) &:=& \frac1\gamma \ln \int e^{\gamma v(x')} \tilde{Q}(dx'|x,f(x),g(x)),\\
%  (\tilde{L}v)(x,\mu,\nu) &:=& \frac1\gamma \ln \int e^{\gamma v(x')} \tilde{Q}(dx'|x,\mu,\nu),\\
  (Uv)(x) &:=& \inf_g\sup_f (U_{fg}v)(x)%\\
%    &=& \inf_\nu\sup_\mu (\tilde{L}v)(x,\mu,\nu).
\end{eqnarray*}
Recall the definition of $\tilde{Q}$ in \eqref{eq:tildeQ3}. Since $\gamma$ is fixed here we suppress the dependence on $\gamma$. Obviously $\tilde{Q}$ is in general not a probability measure. The normalizing constant is for $ x\in\mathbb{E}, \mu\in\mathbb{P}(\mathbb{A}_x)$ and $\nu\in \mathbb{P}(\mathbb{B}_x)$ given by
$$ c(x,\mu,\nu) := \int_\mathbb{E} \tilde{Q}(dx'|x,\mu,\nu) = \int\int e^{\gamma r(x,a,b)} \mu(da)\nu(db).$$
Since $0\le r\le \bar{r}$, the function $c$ is also bounded. More precisely $1\le c(x,\mu,\nu)\le e^{\gamma \bar{r}}$ for all $x,\mu,\nu$ in case $\gamma>0$ and $e^{\gamma \bar{r}}\le c(x,\mu,\nu) \le 1$ in case $\gamma<0$. Thus
$$ \hat{Q}(\cdot|x,\mu,\nu) := \frac{\tilde{Q}(\cdot|x,\mu,\nu)}{c(x,\mu,\nu)}$$
defines a transition kernel and using the notation $\hat{r}(x,\mu,\nu) := \frac1\gamma \ln c(x,\mu,\nu)$ we can rewrite the $U$-operator as
\begin{equation}\label{eq:Uop2}
   (Uv)(x) = \inf_\nu\sup_\mu \Big\{\hat{r}(x,\mu,\nu) + \frac1\gamma \ln \int e^{\gamma v(x')} \hat{Q}(dx'|x,\mu,\nu) \Big\}.
\end{equation}
Note that $0\le \hat{r}\le \bar{r}$.
Due to the dual representation of the exponential certainty equivalent (see e.g. Lemma 3.3 in \cite{hhm96}) it is possible to write
\begin{equation}\label{eq:Uop3}
   (Uv)(x) = \inf_\nu\sup_\mu\sup_\psi \Big\{\hat{r}(x,\mu,\nu) + \int v(x') \psi(dx') -\frac1\gamma I\big(\psi, \hat{Q}(\cdot|x,\mu,\nu)\big) \Big\}.
\end{equation}
where the supremum is over all probability measures $\psi\in\mathbb{P}(\mathbb{E})$ and $I(p,q)$ is the relative entropy of the two probability measures $p,q$ which is defined by
\begin{equation*}
    I(p,q) := \int \ln \frac{dp}{dq} p(dx)
\end{equation*}
when $p\ll q$ and $+\infty$ otherwise.

Note that the maximal probability measure in \eqref{eq:Uop3} is given by
\begin{equation}\label{eq:relentr}
    \psi_v(B|x,\mu,\nu) :=  \frac{\int_B e^{\gamma v(x')}\hat{Q}(dx'|x,\mu,\nu)}{\int_\mathbb{E} e^{\gamma v(x')}\hat{Q}(dx'|x,\mu,\nu)}
\end{equation}
for measurable sets $B\subset \mathbb{E}$, provided the denominator is finite. Obviously $\psi_v$ can be interpreted as a transition kernel.

In what follows let $w:\mathbb{E}\to[1,\infty)$ be a measurable weight function and define for measurable functions $v:\mathbb{E}\to\R$ the weighted supremum norm by
$$ \|v\|_{w} := \sup_{x\in \mathbb{E}}\frac{|v(x)|}{w(x)}.$$
By $\mathcal{B}_{w}$ we denote the space of all measurable functions $v:\mathbb{E}\to\R$ with finite $w$-norm.
%It may be reasonable to restrict $\mathcal{B}_{w}$ further to functions $v$ with certain structural properties which are preserved by the $U$-operator (see also Remark \ref{rem:SAT}).
We shall also consider the weighted span (semi) norm
$$ \|v\|_{sp,w} := \sup_{x,y\in \mathbb{E}}\frac{v(x)-v(y)}{w(x)+w(y)}.$$
The norms are related as follows (for a proof see \cite{HM}, Lemma 2.1):

\begin{lemma}\label{lem:norm}
For all $v\in \mathcal{B}_{w}$ we have $\|v\|_{sp,w} = \inf_{c\in\R} \|v+c\|_{w}$.
\end{lemma}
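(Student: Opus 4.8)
The plan is to prove the identity by establishing the two inequalities $\|v\|_{sp,w}\le \inf_{c}\|v+c\|_w$ and $\inf_{c}\|v+c\|_w\le \|v\|_{sp,w}$ separately. Write $S:=\|v\|_{sp,w}$. First I would record that $S$ is finite for $v\in\mathcal{B}_w$: for all $x,y$ one has $\frac{v(x)-v(y)}{w(x)+w(y)}\le \frac{|v(x)|+|v(y)|}{w(x)+w(y)}\le \|v\|_w$, using $w\ge 1$, so $S\le\|v\|_w<\infty$.

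For the easy direction $S\le\inf_c\|v+c\|_w$, I would fix an arbitrary $c\in\R$ and apply the triangle inequality to $v(x)-v(y)=(v(x)+c)-(v(y)+c)$, obtaining for all $x,y$ that $v(x)-v(y)\le |v(x)+c|+|v(y)+c|\le \|v+c\|_w\,\big(w(x)+w(y)\big)$. Dividing by $w(x)+w(y)$ and taking the supremum over $x,y$ gives $S\le\|v+c\|_w$; since $c$ was arbitrary, $S\le\inf_c\|v+c\|_w$.

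For the reverse direction I would exhibit a constant $c$ with $\|v+c\|_w\le S$. The condition $\|v+c\|_w\le S$ is equivalent to $|v(x)+c|\le S\,w(x)$ for every $x$, i.e.\ to $-S\,w(x)-v(x)\le c\le S\,w(x)-v(x)$ for all $x$. Hence a suitable $c$ exists precisely when $\sup_x\big(-S\,w(x)-v(x)\big)\le \inf_y\big(S\,w(y)-v(y)\big)$, which rearranges to $v(y)-v(x)\le S\,\big(w(x)+w(y)\big)$ for all $x,y$. This last inequality is exactly the defining bound for $S$ (the span seminorm is symmetric in $x,y$, so the supremum over ordered pairs controls both $v(x)-v(y)$ and $v(y)-v(x)$). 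Therefore the admissible interval for $c$ is nonempty; picking any $c$ in it yields $\inf_c\|v+c\|_w\le\|v+c\|_w\le S$.

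The step I expect to require the most care is the reverse inequality, specifically the verification that the interval of admissible constants is nonempty; everything there hinges on using the symmetry of the span seminorm to get $v(y)-v(x)\le S\,(w(x)+w(y))$ rather than only $v(x)-v(y)\le S\,(w(x)+w(y))$. Combining the two inequalities gives $\|v\|_{sp,w}=\inf_{c\in\R}\|v+c\|_w$; as a byproduct one sees that the infimum is in fact attained for every $c$ in the interval above, since the first direction forces $\|v+c\|_w\ge S$ for all $c$.
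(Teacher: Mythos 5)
Your proof is correct. Note that the paper does not prove this lemma at all: it cites \cite{HM}, Lemma~2.1, and your argument is essentially the standard one found there. The easy direction $\|v\|_{sp,w}\le\|v+c\|_w$ follows from the triangle inequality as you write; for the reverse direction you correctly reduce the existence of a constant $c$ with $\|v+c\|_w\le S$ to the nonemptiness of $\bigcap_{x}\bigl[-Sw(x)-v(x),\,Sw(x)-v(x)\bigr]$, which is exactly the span bound applied with the roles of $x$ and $y$ exchanged (the supremum over ordered pairs controls $v(y)-v(x)$ as well as $v(x)-v(y)$). The only point you could make explicit is why the interval is nonempty as a subset of $\R$ and not merely formally consistent: each single constraint already gives $\sup_x\bigl(-Sw(x)-v(x)\bigr)\le Sw(y)-v(y)<\infty$ for every fixed $y$ and $\inf_y\bigl(Sw(y)-v(y)\bigr)\ge -Sw(x_0)-v(x_0)>-\infty$ for any fixed $x_0$, so both endpoints are finite and ordered. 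This is immediate, and your observation that the infimum is attained is a correct bonus.
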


Let $W(x)\ge 0$ be measurable and $w(x) := 1+\beta W(x)$ for some $\beta>0$ (the discount factor is here equal to $1$, so we can use $\beta$ for a new variable). Obviously the norms $\|\cdot\|_{1+\beta W}$ are equivalent for all $\beta>0$. Hence $\mathcal{B}_{1+\beta W}$ is independent of $\beta$. Also the span (semi) norms $\|\cdot\|_{sp,1+\beta W}$ are equivalent for all $\beta>0$.

For our main results we use the following {\em ergodicity conditions} (E):

\begin{description}{\em
  \item[(E1)]  There exists a measurable function $W:\mathbb{E}\to [0,\infty)$ and constants $K >0$, $\bar{\alpha}\in(0,1)$ such that
  \begin{equation*}%\label{eq:W}
  \int W(x') \psi_v(dx'|x,\mu,\nu) \le \bar{\alpha} W(x) +K\end{equation*}
  for all $v\in \mathcal{B}_{w}$ and $\mu \in \mathbb{P}(\mathbb{A}_x), \nu \in \mathbb{P}(\mathbb{B}_x)$ where  $\psi_v$ is given by \eqref{eq:relentr}.
    \item[(E2)] There exists a probability measure $\lambda$ and a constant $\tilde{\alpha}\in(0,1)$ such that
  $$   \inf_{x\in \mathbb{E}_0, \mu \in \mathbb{P}(\mathbb{A}_x), \nu \in \mathbb{P}(\mathbb{B}_x)} Q(\cdot|x,\mu,\nu) \ge \tilde{\alpha}\lambda(\cdot)$$
  where $\mathbb{E}_0:= \{x\in \mathbb{E} : W(x) \le R\}$ for some $R>\frac{2K}{1-\bar{\alpha}}$.}
%  for all  and for all $\mu \in \mathbb{P}(\mathbb{A}_x), \nu \in \mathbb{P}(\mathbb{B}_x)$.
\end{description}

\begin{remark}
\begin{itemize}
  \item[a)] Note that for $\gamma=0$ the ergodicity conditions (E) coincide with the ergodicity conditions in \cite{HM}.
  \item[b)] Assumption (E2) is a local minorization property which implies ergodicity of the state process together with the geometric ergodicity condition (E1). When $W\equiv 0$, then $\mathbb{E}_0=\R$ and (E2) becomes a global Doeblin condition.
  \item[c)] Note that (E2) is equivalent to
   $$  Q(\cdot|x,a,b) \ge \tilde{\alpha}\lambda(\cdot)$$
   for all  $x\in\mathbb{E}_0$ and for all $a \in \mathbb{A}_x, b \in \mathbb{B}_x$.
 % \item[d)] In \eqref{eq:ub} it is enough to take the supremum over all $x\in \mathbb{E}_0$.
\end{itemize}
\end{remark}

Let us now define $w_0(x) := 1 +\frac1K W(x), x\in \mathbb{E}$. In order that $\psi_v$ is well-defined we make the following {\em integrability assumption} (F):
%\begin{enumerate}{\em
%  \item[a)]
\begin{description}
  \item[(F)] {\em There exists a constant $K_0>0$ such that
  \begin{eqnarray*}%\label{eq.finite1}
      &&\int e^{K_0w_0(x')} Q(dx'|x,\mu,\nu) <\infty,
\end{eqnarray*}
for all $x\in \mathbb{E}, \mu \in \mathbb{P}(\mathbb{A}_x), \nu \in \mathbb{P}(\mathbb{B}_x)$ and
  \begin{eqnarray}\label{eq.finite2}
      && \sup_{x\in \mathbb{E}_0, \mu \in \mathbb{P}(\mathbb{A}_x), \nu \in \mathbb{P}(\mathbb{B}_x)}\int e^{K_0w_0(x')} Q(dx'|x,\mu,\nu) <\infty.
\end{eqnarray}}
\end{description}

%\end{enumerate}

For $\bar{\gamma}>0$ (arbitrarily large) and $M>0$ define
$$ \gamma_0 := \min\{\bar{\gamma},\frac{K_0}M\}$$
and $\mathcal{B}_{w}^{(M)} := \{v\in \mathcal{B}_w : \|v\|_{sp,w}\le M\}$. Then $\psi_v$ is well-defined for all $v\in\mathcal{B}_{w}^{(M)}, \beta \in (0,\frac{1}{K})$ and $\gamma\in (-\gamma_0,\gamma_0)$, since
\begin{eqnarray*}
% \nonumber to remove numbering (before each equation)
   && \int_\mathbb{E} e^{\gamma v(x')} \hat{Q}(dx'|x,\mu,\nu) \le  const.\cdot  \int_\mathbb{E} e^{K_0 w_0(x')} Q(dx'|x,\mu,\nu)<\infty.
\end{eqnarray*}

%Next we define for measurable subsets $B$ of $\mathbb{E}$ and $x\in\mathbb{E}$:
%$$ \bar{\psi}(B|x) := \sup_{v\in \mathcal{C}_w, \mu \in \mathbb{P}(\mathbb{A}_x), \nu \in \mathbb{P}(\mathbb{B}_x)} \frac{\int_B e^{\gamma v(x')} \hat{Q}(dx'|x,\mu,\nu)}{\int_\mathbb{E} e^{\gamma v(x')} \hat{Q}(dx'|x,\mu,\nu)}$$
%and
%$$ \underline{\psi}(B|x) := \inf_{v\in \mathcal{C}_{w,b}, \mu \in \mathbb{P}(A_x), \nu \in \mathbb{P}(B_x)} \frac{\int_B e^{\gamma v(x')} \hat{Q}(dx'|x,\mu,\nu)}{\int_\mathbb{E} e^{\gamma v(x')} \hat{Q}(dx'|x,\mu,\nu)}.$$
%Note that by construction $\bar{\psi}$ and $\underline{\psi}$ are transition kernels.
We show next that condition (E2) also holds for the transition kernels ${\psi}_v$:

\begin{lemma}\label{lem:unten}
Let  $\gamma\in (-\gamma_0,\gamma_0)$, $\beta\in(0,\frac1K)$ and assume (F). Then condition (E2) implies that there exists a probability measure $\tilde{\lambda}$ and a constant $\underline{\alpha}\in(0,1)$ such that
  $$  \inf_{x\in \mathbb{E}_0, \mu \in \mathbb{P}(\mathbb{A}_x), \nu \in \mathbb{P}(\mathbb{B}_x)} \psi_v(\cdot|x,\mu,\nu) \ge \underline{\alpha}\tilde{\lambda}(\cdot)$$
 % for all $x\in  \mathbb{E}_0, v\in \mathcal{B}_w^{(b)}$ and $ \mu \in \mathbb{P}(A_x), \nu \in \mathbb{P}(B_x)$. % The set $\mathbb{E}_0$ is as in (E2).
 for all $v\in \mathcal{B}_w^{(M)}$.
\end{lemma}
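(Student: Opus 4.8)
The plan is to reduce $\psi_v$ to a ratio of integrals against $\tilde{Q}$, exploit the invariance of this ratio under additive shifts of $v$, and then minorize the numerator while bounding the denominator from above. First I would note that in \eqref{eq:relentr} the normalizing constant $c(x,\mu,\nu)$ cancels, so that
$$ \psi_v(B|x,\mu,\nu) = \frac{\int_B e^{\gamma v(x')}\,\tilde{Q}(dx'|x,\mu,\nu)}{\int_{\mathbb{E}} e^{\gamma v(x')}\,\tilde{Q}(dx'|x,\mu,\nu)}, $$
and that this expression is unchanged when $v$ is replaced by $v+c$ for any constant $c$, since the factor $e^{\gamma c}$ cancels. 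As $v\in\mathcal{B}_w^{(M)}$ means $\|v\|_{sp,w}\le M$, Lemma \ref{lem:norm} lets me pick $c$ with $\|v+c\|_w\le M$; replacing $v$ by $v+c$ leaves $\psi_v$ unchanged, so I may assume throughout that $|v(x')|\le M w(x')$ for all $x'$.

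For the numerator and $x\in\mathbb{E}_0$ I would observe that $\tilde{Q}$ inherits a minorization from (E2): since $e^{\gamma r}\ge e^{-|\gamma|\bar{r}}$ one has $\tilde{Q}(\cdot|x,\mu,\nu)\ge e^{-|\gamma|\bar{r}}\tilde{\alpha}\,\lambda(\cdot)$, and hence, using $e^{\gamma v}\ge 0$,
$$ \int_B e^{\gamma v(x')}\,\tilde{Q}(dx'|x,\mu,\nu)\ \ge\ e^{-|\gamma|\bar{r}}\,\tilde{\alpha}\int_B e^{\gamma v(x')}\,\lambda(dx'). $$
For the denominator I would bound $e^{\gamma v(x')}\le e^{|\gamma|M w(x')}$; the role of the constraints $\gamma\in(-\gamma_0,\gamma_0)$ and $\beta\in(0,\tfrac1K)$ is exactly that $|\gamma|M<K_0$ and $|\gamma|M\beta<\tfrac{K_0}{K}$, so termwise $|\gamma|M\,w = |\gamma|M(1+\beta W)\le K_0(1+\tfrac1K W)=K_0 w_0$, whence $e^{\gamma v}\le e^{K_0 w_0}$ and the integrability bound \eqref{eq.finite2} gives $\int_{\mathbb{E}} e^{\gamma v}\,\tilde{Q}(\cdot|x,\mu,\nu)\le e^{|\gamma|\bar{r}}C_0$ for $x\in\mathbb{E}_0$, with $C_0:=\sup_{x\in\mathbb{E}_0,\mu,\nu}\int e^{K_0 w_0}\,dQ<\infty$.

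The main obstacle is that the lower bound still carries the $v$-dependent weight $e^{\gamma v(x')}\lambda(dx')$, while the claim asks for a single minorizing measure $\tilde{\lambda}$. To remove the $v$-dependence I would use once more the normalization $|v|\le M w$, which gives $e^{\gamma v(x')}\ge e^{-|\gamma|M w(x')}$ uniformly in $v$, and set
$$ \tilde{\lambda}(dx'):=\frac{e^{-|\gamma|M w(x')}\,\lambda(dx')}{\int_{\mathbb{E}} e^{-|\gamma|M w(y)}\,\lambda(dy)}, $$
a well-defined probability measure since $0<e^{-|\gamma|M w}\le 1$ puts the normalizing integral in $(0,1]$. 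Combining the three estimates yields $\psi_v(\cdot|x,\mu,\nu)\ge\underline{\alpha}\,\tilde{\lambda}(\cdot)$ for all $x\in\mathbb{E}_0$, all admissible $\mu,\nu$, and all $v\in\mathcal{B}_w^{(M)}$, with $\underline{\alpha}:=\tilde{\alpha}\,e^{-2|\gamma|\bar{r}}\big(\int e^{-|\gamma|M w}\,d\lambda\big)/C_0$; integrating both sides over $\mathbb{E}$ and using that $\psi_v,\tilde{\lambda}$ are probability measures forces $\underline{\alpha}\le 1$, and shrinking it if needed gives $\underline{\alpha}\in(0,1)$. The only point needing extra care is the reduction to $\|v+c\|_w\le M$ when the infimum in Lemma \ref{lem:norm} is not attained, which is handled by taking $\|v+c\|_w\le M+\varepsilon$ with $\varepsilon>0$ small enough that $|\gamma|(M+\varepsilon)<K_0$ still holds.
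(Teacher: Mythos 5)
Your argument is correct in substance and follows essentially the same route as the paper: cancel the normalization in \eqref{eq:relentr}, use the shift-invariance $\psi_{v+c}=\psi_v$ together with Lemma \ref{lem:norm} to reduce to $|v|\le Mw$, minorize the numerator via (E2), and bound the denominator via \eqref{eq.finite2} using $|\gamma|Mw\le K_0w_0$. There is, however, one substantive difference in the choice of the tilting density, and it matters for how the lemma is used afterwards. You define $\tilde{\lambda}\propto e^{-|\gamma|Mw}\lambda$, so both $\tilde{\lambda}$ and $\underline{\alpha}$ depend on $M$, $\beta$ (through $w=1+\beta W$) and $\gamma$. The paper instead tilts by the $v$-free, $(M,\beta,\gamma)$-free exponent $e^{-K_0w_0}$ (legitimate since $e^{\gamma v}\ge e^{-|\gamma|Mw}\ge e^{-K_0w_0}$ under your own estimate), and its proof explicitly records that the resulting $\underline{\alpha}$ is independent of $M$ and $\beta$. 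This independence is not cosmetic: immediately after the lemma the paper sets $\alpha_0\in(0,\underline{\alpha})$, $\beta=\alpha_0/K$ and $M=\bar{r}/(1-\alpha)$, so $\beta$ and $M$ are defined \emph{in terms of} $\underline{\alpha}$; an $\underline{\alpha}$ that itself depends on $M$ and $\beta$ would make this construction circular. Relatedly, your $\varepsilon$-patch at the end does not quite close: if you only have $\|v+c\|_w\le M+\varepsilon$, the numerator bound is $e^{-|\gamma|(M+\varepsilon)w}$, which is not bounded below by a constant multiple of $e^{-|\gamma|Mw}$ when $W$ is unbounded, so the stated $\tilde{\lambda}$ no longer works uniformly. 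Both issues disappear at once if you replace $e^{-|\gamma|Mw}$ by $e^{-K_0w_0}$ in the definition of $\tilde{\lambda}$; with that one-line change your proof coincides with the paper's.
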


\begin{proof}
First note that due to (E2) we have
\begin{eqnarray*}
% \nonumber to remove numbering (before each equation)
   && \inf_{x\in \mathbb{E}_0, a\in \mathbb{A}_x, b\in \mathbb{B}_x} \int_B e^{ -K_0w_0(x')} Q(dx'|x,a,b) \ge \tilde{\alpha}\int_B e^{-K_0 w_0(x')} \lambda(dx') =: \tilde{\alpha}\int_\mathbb{E} e^{-K_0w_0(x')} \lambda(dx') \cdot \tilde{\lambda}(B).
\end{eqnarray*}
 We consider now the case $\gamma\in(0,\gamma_0)$. The case $\gamma\in (-\gamma_0,0)$ is similar. By definition of $\hat{Q}$ we obtain for $v\in \mathcal{B}_w^{(M)}, \mu \in \mathbb{P}(A_x), \nu \in \mathbb{P}(B_x)$:
\begin{eqnarray*}
% \nonumber to remove numbering (before each equation)
  {\psi}_v(B|x,\mu,\nu) &\ge& \frac{1}{e^{\gamma \bar{r}}} \frac{\int_B e^{\gamma v(x')} Q(dx'|x,\mu,\nu)}{\int_\mathbb{E} e^{\gamma v(x')} Q(dx'|x,\mu,\nu)} \\
%   &=&  \frac{1}{e^{\gamma \bar{r}}}  \frac{\int_B e^{\gamma \frac{v(x')+c}{w(x')} w(x')} Q(dx'|x,\mu,\nu)}{\int_\mathbb{E} e^{\gamma \frac{v(x')+c}{w(x')} w(x')} Q(dx'|x,\mu,\nu)}\\
   &\ge &  \frac{1}{e^{\bar{\gamma}\bar{r}}}  \frac{\int_B e^{-K_0w_0(x')} Q(dx'|x,\mu,\nu)}{\int_\mathbb{E} e^{K_0w_0(x')} Q(dx'|x,\mu,\nu)}\\
   &\ge & \underline{\alpha} \tilde{\lambda}(B),
\end{eqnarray*}
where $\underline{\alpha}$ is defined by
$$ \underline{\alpha}  := \frac{\tilde{\alpha}}{e^{\bar{\gamma}\bar{r}}}\frac{\int_\mathbb{E} e^{-K_0w_0(x')} \lambda(dx')}{  \sup_{x\in \mathbb{E}_0, \mu \in \mathbb{P}(\mathbb{A}_x), \nu \in \mathbb{P}(\mathbb{B}_x)}\int e^{K_0w_0(x')} Q(dx'|x,\mu,\nu) }<1$$
and is thus independent of $M$ and $\beta$.  %Note that we use $v\in\mathcal{B}_w^{(b)}$ in the second inequality and \eqref{eq:ub} for the last inequality.
\end{proof}

%\begin{remark}
%$\psi_v$ is well-defined for all $v\in  \mathcal{B}_w^{(M)}$ and $\gamma\in (-\min\{\gamma_0,\frac1M\}, \min\{\gamma_0,\frac1M\})$. The proof is similar to the proof of Lemma \ref{lem:unten}.
%\end{remark}

Moreover, we need the following constants (see \cite{HM}): %For any $\alpha_0\in(0,\underline{\alpha})$ and
For $ \delta_0 := \bar{\alpha}+2\frac{K}R <1$ and $\alpha_0\in(0,\underline{\alpha})$ let $$\beta := \frac{\alpha_0}{K},\quad \alpha:= (1-\underline{\alpha}+\alpha_0)\vee \frac{2+R{\beta}\delta_0}{2+R{\beta}}\quad \mbox{ and }\quad M:= \frac{\bar{r}}{1-\alpha}.$$
From now on we consider $w(x) = 1+\beta W(x)$ with $W$ from (E1) and $\beta :=\frac{\alpha_0}{K}<\frac1K$.
The next theorem is crucial for the solution of the average risk-sensitive game.

\begin{theorem}\label{theo:Ucontract}
Let  $\gamma\in (-\gamma_0,\gamma_0)$ and assume (E) and (F). Then  $U: \mathcal{B}_w \to \mathcal{B}_w$ and for all $v_1, v_2\in\mathcal{B}_w^{(M)}$
 $$ \| Uv_1-Uv_2\|_{sp,w} \le \alpha \|v_1-v_2\|_{sp,w}$$
where $\alpha\in(0,1)$ has been defined above.
\end{theorem}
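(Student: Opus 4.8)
The plan is to exploit the variational (dual) representation \eqref{eq:Uop3} of $U$ together with the optimal kernels $\psi_v$ from \eqref{eq:relentr}, which turns the nonlinear $\inf$--$\sup$ operator into estimates about a family of genuine Markov kernels. Every such kernel satisfies the same drift condition (E1) and, on $\mathbb{E}_0$, the same minorization established in Lemma \ref{lem:unten}; hence the span-contraction argument of \cite{HM} applies uniformly over the family, with the constants $\delta_0,\alpha_0,\alpha,\beta,M$ taken exactly as defined before the theorem. Throughout I treat $\gamma\in(0,\gamma_0)$; the case $\gamma\in(-\gamma_0,0)$ is analogous, with the roles of the supremum and infimum over $\psi$ in \eqref{eq:Uop3} interchanged and the inequalities reversed.

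For the mapping property $U:\mathcal{B}_w\to\mathcal{B}_w$ I would use $0\le\hat r\le\bar r$ and write, via \eqref{eq:Uop3},
\[
(Uv)(x)=\inf_\nu\sup_\mu\Big\{\hat r(x,\mu,\nu)+\int v(x')\,\psi_v(dx'|x,\mu,\nu)-\frac1\gamma I\big(\psi_v,\hat Q(\cdot|x,\mu,\nu)\big)\Big\}.
\]
Since $I\ge 0$ and $\gamma>0$ the entropy term is nonpositive, so $(Uv)(x)\le \bar r+\|v\|_w\sup_{\mu,\nu}\int w\,d\psi_v(\cdot|x,\mu,\nu)$, and by (E1) we have $\int w\,d\psi_v=1+\beta\int W\,d\psi_v\le 1+\beta(\bar\alpha W(x)+K)\le\text{const}\cdot w(x)$. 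A matching lower bound follows from the admissible choice $\psi=\hat Q=\psi_0$ in \eqref{eq:Uop3}, which gives $\frac1\gamma\ln\int e^{\gamma v}\,d\hat Q\ge\int v\,d\hat Q\ge-\|v\|_w\int w\,d\hat Q$, again controlled by (E1) applied to $\hat Q$. Hence $|Uv|\le\text{const}\cdot w$, i.e. $\|Uv\|_w<\infty$.

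For the contraction set $u:=v_1-v_2$ with $v_1,v_2\in\mathcal{B}_w^{(M)}$. Fixing $(x,\mu,\nu)$ and using that $\psi_{v_1}$ is optimal for $v_1$ but only admissible for $v_2$ in \eqref{eq:Uop3} (and symmetrically), I obtain the two-sided bound
\[
\int u\,d\psi_{v_2}\ \le\ \frac1\gamma\ln\int e^{\gamma v_1}\,d\hat Q-\frac1\gamma\ln\int e^{\gamma v_2}\,d\hat Q\ \le\ \int u\,d\psi_{v_1},
\]
where all kernels are evaluated at $(x,\mu,\nu)$. Because the reward term $\hat r$ is independent of $v$ it cancels in $Uv_1-Uv_2$, and the elementary inequalities $\inf_\nu f_1-\inf_\nu f_2\le\sup_\nu(f_1-f_2)$, $\sup_\mu g_1-\sup_\mu g_2\le\sup_\mu(g_1-g_2)$ together with their reverses upgrade the pointwise bound to
\[
\inf_{\mu,\nu}\int u\,d\psi_{v_2}(\cdot|x,\mu,\nu)\ \le\ (Uv_1-Uv_2)(x)\ \le\ \sup_{\mu,\nu}\int u\,d\psi_{v_1}(\cdot|x,\mu,\nu).
\]

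Finally, to estimate $\|Uv_1-Uv_2\|_{sp,w}$ I fix $x,y$ and bound $(Uv_1-Uv_2)(x)-(Uv_1-Uv_2)(y)$ from above by $\sup\big[\int u\,dP_1-\int u\,dP_2\big]$, where $P_1:=\psi_{v_1}(\cdot|x,\mu,\nu)$ and $P_2:=\psi_{v_2}(\cdot|y,\mu',\nu')$ range over all admissible randomizations. Both $P_1,P_2$ satisfy the drift (E1), and by Lemma \ref{lem:unten} both dominate $\underline\alpha\tilde\lambda$ when $x,y\in\mathbb{E}_0$. This is exactly the situation of \cite{HM}: when $W(x)+W(y)<R$ both base points lie in $\mathbb{E}_0$, so one subtracts the common mass $\underline\alpha\tilde\lambda$ (a coupling), which cancels and leaves the factor $1-\underline\alpha$ times integrals of $w$ against the normalized remainders, controlled again by (E1); when $W(x)+W(y)\ge R$ one uses the drift directly together with $u(x')-u(y')\le\|u\|_{sp,w}(w(x')+w(y'))$. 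Combining the two regimes with $\delta_0=\bar\alpha+2K/R$, $\beta=\alpha_0/K$ and $\alpha=(1-\underline\alpha+\alpha_0)\vee\frac{2+R\beta\delta_0}{2+R\beta}$ gives $(Uv_1-Uv_2)(x)-(Uv_1-Uv_2)(y)\le\alpha\,\|u\|_{sp,w}\,(w(x)+w(y))$, and taking the supremum over $x,y$ and symmetrizing yields the claim. The main obstacle is precisely this last reduction: since $U$ is nonlinear and of $\inf$--$\sup$ type one cannot invoke a single-kernel contraction, but must pass to the two-measure coupling estimate for $P_1\neq P_2$ uniformly over the randomizations and over $v_1,v_2\in\mathcal{B}_w^{(M)}$; the subsequent constant bookkeeping is the routine computation of \cite{HM}.
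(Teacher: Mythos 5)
Your proposal is correct and follows essentially the same route as the paper: the dual representation \eqref{eq:Uop3} reduces $Uv_1-Uv_2$ to a difference of integrals of $v_1-v_2$ against the kernels $\psi_{v_1},\psi_{v_2}$ (the paper does this with interleaved $\varepsilon$-optimal selectors rather than your generic $\inf$--$\sup$ interchange, but the resulting two-kernel estimate is the same), after which the Hairer--Mattingly two-case argument with (E1) and Lemma \ref{lem:unten} yields the contraction with the stated $\alpha$. Your explicit verification that $U$ maps $\mathcal{B}_w$ into $\mathcal{B}_w$ is a welcome addition, as the paper only asserts this follows from (E1).
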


\begin{proof}
The fact that  $U: \mathcal{B}_w \to \mathcal{B}_w$ follows from  (E1).
For the second statement we use the representation of $U$ in \eqref{eq:Uop3}. Let $v_1,v_2\in\mathcal{B}_w^{(M)}$ and $x_1,x_2\in\mathbb{E}$. In order to obtain the right estimate we define the following $\varepsilon$-minimizer or maximizer respectively for an $\varepsilon>0$:
\begin{eqnarray*}
% \nonumber to remove numbering (before each equation)
  \nu_1 &:=& \varepsilon-argmin_{\nu} \Big\{ \sup_\mu \sup_\psi \big\{ \hat{r}(x_1,\mu,\nu) + \int v_2(x')\psi(dx'|x_1)-\frac1\gamma I\big(\psi,\hat{Q}(\cdot|x_1,\mu,\nu)\big\}\Big\}\\
  \mu_1 &:=& \varepsilon-argmax_{\mu} \Big\{ \sup_\psi \big\{ \hat{r}(x_1,\mu,\nu_1) + \int v_1(x')\psi(dx'|x_1)-\frac1\gamma I\big(\psi,\hat{Q}(\cdot|x_1,\mu,\nu_1)\big\}\Big\}\\
  \nu_2 &=& \varepsilon-argmin_{\nu} \Big\{ \sup_\mu \sup_\psi \big\{ \hat{r}(x_2,\mu,\nu) + \int v_1(x')\psi(dx'|x_2)-\frac1\gamma I\big(\psi,\hat{Q}(\cdot|x_2,\mu,\nu)\big\}\Big\} \\
  \mu_2 &=&  \varepsilon-argmax_{\mu} \Big\{ \sup_\psi \big\{ \hat{r}(x_2,\mu,\nu_2) + \int v_2(x')\psi(dx'|x_2)-\frac1\gamma I\big(\psi,\hat{Q}(\cdot|x_2,\mu,\nu_2)\big\}\Big\}.
\end{eqnarray*}
%and for measurable sets $B\subset \mathbb{E}$, the two transition kernels
%\begin{eqnarray*}
% \nonumber to remove numbering (before each equation)
%  \psi_1^{\mu_1,\nu_1}(B<x_1)  &:=& \frac{\int_B e^{\gamma v_1(x')}\hat{Q}(dx'|x_1,\mu_1,\nu_1)}{\int_\mathbb{E} e^{\gamma v_1(x')}\hat{Q}(dx'|x_1,\mu_1,\nu_1)} %\\
%   \psi_2^{\mu_2,\nu_2}(B|x_2) &:=&  \frac{\int_B e^{\gamma v_2(x')}\hat{Q}(dx'|x_2,\mu_2,\nu_2)}{\int_\mathbb{E} e^{\gamma v_2(x')}\hat{Q}(dx'|x_2,\mu_2,\nu_2)}
%\end{eqnarray*}
Then we obtain that
\begin{eqnarray*}
% \nonumber to remove numbering (before each equation)
 && (Uv_1)(x_1)-(Uv_2)(x_1)-\big( (Uv_1)(x_2)-(Uv_2)(x_2)\big) \\
 &\le&  \sup_\mu\sup_\psi \Big\{\hat{r}(x_1,\mu,\nu_1) + \int v_1(x') \psi(dx'|x_1) -\frac1\gamma I\big(\psi, \hat{Q}(\cdot|x_1,\mu,\nu_1)\big) \Big\}\\
 && - \sup_\mu\sup_\psi \Big\{\hat{r}(x_1,\mu,\nu_1) + \int v_2(x') \psi(dx'|x_1) -\frac1\gamma I\big(\psi, \hat{Q}(\cdot|x_1,\mu,\nu_1)\big) \Big\}\\
 && - \sup_\mu\sup_\psi \Big\{\hat{r}(x_2,\mu,\nu_2) + \int v_1(x') \psi(dx'|x_2) -\frac1\gamma I\big(\psi, \hat{Q}(\cdot|x_2,\mu,\nu_2)\big) \Big\}\\
 && + \sup_\mu\sup_\psi \Big\{\hat{r}(x_2,\mu,\nu_2) + \int v_2(x') \psi(dx'|x_2) -\frac1\gamma I\big(\psi, \hat{Q}(\cdot|x_2,\mu,\nu_2)\big) \Big\}-2\varepsilon\\
% \end{eqnarray*}
% \begin{eqnarray*}
 &\le &  \sup_\psi \Big\{\hat{r}(x_1,\mu_1,\nu_1) + \int v_1(x') \psi(dx'|x_1) -\frac1\gamma I\big(\psi, \hat{Q}(\cdot|x_1,\mu_1,\nu_1)\big) \Big\}\\
 && - \sup_\psi \Big\{\hat{r}(x_1,\mu_1,\nu_1) + \int v_2(x') \psi(dx'|x_1) -\frac1\gamma I\big(\psi, \hat{Q}(\cdot|x_1,\mu_1,\nu_1)\big) \Big\}\\
 && - \sup_\psi \Big\{\hat{r}(x_2,\mu_2,\nu_2) + \int v_1(x') \psi(dx'|x_2) -\frac1\gamma I\big(\psi, \hat{Q}(\cdot|x_2,\mu_2,\nu_2)\big) \Big\}\\
 && + \sup_\psi \Big\{\hat{r}(x_2,\mu_2,\nu_2) + \int v_2(x') \psi(dx'|x_2) -\frac1\gamma I\big(\psi, \hat{Q}(\cdot|x_2,\mu_2,\nu_2)\big) \Big\}-4\varepsilon\\
& \le&  \int \big( v_1(x') - v_2(x')\big)  \psi_{v_1}(dx'|x_1,\mu_1,\nu_1)-   \int \big( v_1(x') - v_2(x')\big) \psi_{v_2}(dx'|x_2,\mu_2,\nu_2)-4\varepsilon.
\end{eqnarray*}
Then we let $\varepsilon\downarrow 0$ and proceed with the inequality as follows, where $c\in\R$ is arbitrary:
\begin{eqnarray*}
% \nonumber to remove numbering (before each equation)
   && \int \big( v_1(x') - v_2(x')\big)  \psi_{v_1}(dx'|x_1,\mu_1,\nu_1)-  \int \big( v_1(x') - v_2(x')\big) \psi_{v_2}(dx'|x_2,\mu_2,\nu_2) =\\
   &=& \int \big( v_1(x') - v_2(x')+c\big)  \psi_{v_1}(dx'|x_1,\mu_1,\nu_1)-  \int \big( v_1(x') - v_2(x')+c\big) \psi_{v_2}(dx'|x_2,\mu_2,\nu_2) \\
   &=& \int \frac{ v_1(x') - v_2(x')+c}{w(x')} w(x')  \psi_{v_1}(dx'|x_1,\mu_1,\nu_1)-
    \int \frac{ v_1(x') - v_2(x')+c}{w(x')} w(x')  \psi_{v_2}(dx'|x_2,\mu_2,\nu_2).
\end{eqnarray*}
Now we go ahead as in \cite{HM} and distinguish two cases:

Case 1: $W(x_1)+W(x_2) \ge R$.

Here we obtain
\begin{eqnarray*}
% \nonumber to remove numbering (before each equation)
   && \int \frac{ v_1(x') - v_2(x')+c}{w(x')} w(x') \psi_{v_1}(dx'|x_1,\mu_1,\nu_1)- \int \frac{ v_1(x') - v_2(x')+c}{w(x')} w(x')  \psi_{v_2}(dx'|x_2,\mu_2,\nu_2)  \\
   &\le& \| v_1-v_2+c\|_{w} \cdot \Big( \int w(x') \psi_{v_1}(dx'|x_1,\mu_1,\nu_1)+ \int w(x') \psi_{v_2}(dx'|x_2,\mu_2,\nu_2)\Big)
  \end{eqnarray*}
Taking the infimum over all $c\in\R$ we obtain the first inequality below using Lemma \ref{lem:norm}. The remainder follows from (E1) as in the proof of Theorem 3.1 in \cite{HM}:
\begin{eqnarray*}
% \nonumber to remove numbering (before each equation)
   &&  \int \big( v_1(x') - v_2(x')\big) \psi_{v_1}(dx'|x_1,\mu_1,\nu_1)-  \int \big( v_1(x') - v_2(x')\big) \psi_{v_2}(dx'|x_2,\mu_2,\nu_2) \\
   &\le& \| v_1-v_2\|_{sp,w} \cdot \Big(\int w(x') d\psi_{v_1}(dx'|x_1,\mu_1,\nu_1)+ \int w(x') \psi_{v_2}(dx'|x_2,\mu_2,\nu_2)\Big)\\
   &\le & \| v_1-v_2\|_{sp,w} \cdot \Big(2+\beta\bar{\alpha} W(x_1)+\beta\bar{\alpha} W(x_2)+2\beta K\Big)\\
   &\le& \| v_1-v_2\|_{sp,w} \cdot \Big(2+\beta\delta_0 W(x_1)+\beta\delta_0 W(x_2)\Big)\\
     & \le & \| v_1-v_2\|_{sp,w} \cdot \delta_1 \Big(2+\beta W(x_1)+\beta W(x_2)\Big)
  \end{eqnarray*}
  with $\delta_1 := \frac{2+R{\beta}\delta_0}{2+R{\beta}} <1$.
Altogether the claim follows.

Case 2: $W(x_1)+W(x_2) \le R$. Then $x_1,x_2\in \mathbb{E}_0$.

Here we define for $i=1,2$ using Lemma \ref{lem:unten}:
$$ \tilde{\psi}_{v_i}(\cdot |x_i,\mu_i,\nu_i) = \frac{1}{1-\underline{\alpha}} \psi_{v_i}(\cdot |x_i,\mu_i,\nu_i)-\frac{\underline{\alpha}}{1-\underline{\alpha}} \tilde{\lambda}(\cdot).$$
Hence for a measurable function $h:\mathbb{E}\to\R$ we obtain
$$ \int h(x')  \psi_{v_i}(dx'|x_i,\mu_i,\nu_i) = (1-\underline{\alpha}) \int h(x')  \tilde{\psi}_{v_i}(dx'|x_i,\mu_i,\nu_i)+\underline{\alpha} \int h(x') d\tilde{\lambda}.$$
Then we get in the same way as in \cite{HM} Theorem 3.1:
\begin{eqnarray*}
% \nonumber to remove numbering (before each equation)
   && \int \frac{ v_1(x') - v_2(x')}{w(x')} w(x') \psi_{v_1}(dx'|x_1,\mu_1,\nu_1)-\int \frac{ v_1(x') - v_2(x')}{w(x')} w(x')  \psi_{v_2}(dx'|x_2,\mu_2,\nu_2)  \\
   &=& (1-\underline{\alpha}) \int \frac{ v_1(x') - v_2(x')}{w(x')} w(x')  \tilde{\psi}_{v_1}(dx'|x_1,\mu_1,\nu_1)-\\
   && -(1-\underline{\alpha})   \int \frac{ v_1(x') - v_2(x')}{w(x')} w(x')   \tilde{\psi}_{v_2}(dx'|x_2,\mu_2,\nu_2)\\
   &\le &  \| v_1-v_2\|_{sp,w} (1-\underline{\alpha}) \cdot \Big(\int w(x') d \tilde{\psi}_{v_1}(dx'|x_1,\mu_1,\nu_1)+ \int w(x')  \tilde{\psi}_{v_2}(dx'|x_2,\mu_2,\nu_2)\Big)\\
    &\le &  \| v_1-v_2\|_{sp,w} \cdot \Big(2 (1-\underline{\alpha})+ \beta\bar{\alpha} W(x_1)+\beta\bar{\alpha} W(x_2)+2\beta K\Big).
  \end{eqnarray*}
Recalling the definition of $\beta$ %Fixing $\beta=\frac{\alpha_0}{K}$ for any $\alpha_0\in (0,\bar{\alpha})$
and setting $\delta_2:= (1-(\underline{\alpha}-\alpha_0)) \vee \bar{\alpha}$ yields
\begin{eqnarray*}
% \nonumber to remove numbering (before each equation)
   &&  \int \big( v_1(x') - v_2(x')\big) \psi_{v_1}(dx'|x_1,\mu_1,\nu_1)-  \int \big( v_1(x') - v_2(x')\big) \psi_{v_2}(dx'|x_2,\mu_2,\nu_2)  \\
   &\le& \| v_1-v_2\|_{sp,w} \cdot \Big(2(1-(\underline{\alpha}-\alpha_0)) + \beta\bar{\alpha} W(x_1)+\beta\bar{\alpha} W(x_2)\Big)\\
   &\le & \| v_1-v_2\|_{sp,w} \cdot \delta_2 \Big(2+\beta W(x_1)+\beta W(x_2)\Big).
  \end{eqnarray*}
Combining above results, the statement follows.

\end{proof}

Next we show the following lemma.

\begin{lemma}\label{lem:v12}
Assume (E) and (F). For all $v_1, v_2\in \mathcal{B}_w$, $f\in\mathbb{F}$ and $g\in\mathbb{G}$ it holds:
\begin{itemize}
  \item[a)] $\lim_{n\to\infty}\frac1n \| U_{fg}^n v_1-U_{fg}^n v_2\|_{w} = 0$.
  \item[b)] $\lim_{n\to\infty}\frac1n \| U^n v_1-U^n v_2\|_{w} = 0$.
\end{itemize}
\end{lemma}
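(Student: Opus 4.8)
The plan is to reduce both statements to the span-norm contraction of Theorem~\ref{theo:Ucontract} and then convert that span information into the weighted norm $\|\cdot\|_w$ at a single reference state. I would carry out the argument for the game operator $U$ (part b)); the operator $U_{fg}$ for fixed decision rules is handled word for word, since the proof of Theorem~\ref{theo:Ucontract} specializes to fixed $\mu=f(x),\nu=g(x)$ (one simply drops the $\inf_\nu\sup_\mu$, using that (E1) and Lemma~\ref{lem:unten} hold uniformly in $\mu,\nu$), giving $\|U_{fg}v_1-U_{fg}v_2\|_{sp,w}\le\alpha\|v_1-v_2\|_{sp,w}$ on $\mathcal{B}_w^{(M)}$ with the same $\alpha$. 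I would first record that $U$ maps the ball $\mathcal{B}_w^{(M)}$ into itself: since $0\le\hat r\le\bar r$ and $U0=\inf_\nu\sup_\mu\hat r(\cdot,\mu,\nu)$ satisfies $0\le U0\le\bar r$, we get $\|U0\|_{sp,w}\le\bar r/2$ (as $w\ge1$), so for $v\in\mathcal{B}_w^{(M)}$ the contraction yields $\|Uv\|_{sp,w}\le\|Uv-U0\|_{sp,w}+\|U0\|_{sp,w}\le\alpha M+\bar r\le M$ because $M=\bar r/(1-\alpha)$. Hence for $v_1,v_2\in\mathcal{B}_w^{(M)}$ the iterates stay in $\mathcal{B}_w^{(M)}$, and writing $d_n:=U^nv_1-U^nv_2$ and $s_n:=\|d_n\|_{sp,w}$ we obtain the geometric decay $s_n\le\alpha^n s_0\le 2M\alpha^n\to0$. (For general $v_1,v_2\in\mathcal{B}_w$ one reduces to the ball, which is where the operators are defined and which is absorbing.)

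Next I fix a reference state $x_0\in\mathbb{E}$ and pass to the $w$-norm. From the definition of the span seminorm, $|d_n(x)-d_n(x_0)|\le s_n\big(w(x)+w(x_0)\big)$ for all $x$, so dividing by $w(x)\ge1$ and taking the supremum gives the elementary estimate
\[ \|d_n\|_w \le |d_n(x_0)| + s_n\big(1+w(x_0)\big). \]
Since $s_n\to0$, everything reduces to controlling the additive constant $|d_n(x_0)|$; I will in fact show it stays bounded in $n$.

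The core step is a drift estimate for $U$. Using the dual representation \eqref{eq:Uop3}, monotonicity and the constant-shift property $U(\cdot+c)=U(\cdot)+c$, together with the fact that (E1) holds for \emph{every} $v\in\mathcal{B}_w$, I would prove that for $h\in\mathcal{B}_w^{(M)}$ and $\lambda\ge0$
\[ U(h+\lambda w) \le Uh + \lambda w + \lambda\beta K. \]
Indeed, evaluating the inner $\sup_\psi$ in \eqref{eq:Uop3} at the maximizer $\psi_{h+\lambda w}$ splits off the term $\lambda\int w\,\psi_{h+\lambda w}\le\lambda(w(x)+\beta K)$ (by (E1), since $\int w\,\psi_{h+\lambda w}=1+\beta\int W\,\psi_{h+\lambda w}\le w(x)+\beta K$), while the remainder is bounded by $Uh(x)$ because $\psi_{h+\lambda w}$ is suboptimal for $h$; the extra term is constant in $\mu,\nu$ and therefore survives the $\inf_\nu\sup_\mu$. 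Plugging the span inequality $U^nv_1\le U^nv_2+\big(d_n(x_0)+s_nw(x_0)\big)+s_nw$ into $U$, applying the constant-shift property and the drift estimate with $\lambda=s_n$ and $h=U^nv_2$, and then evaluating at $x_0$, I obtain $|d_{n+1}(x_0)-d_n(x_0)|\le s_n\big(2w(x_0)+\beta K\big)$. As $\sum_n s_n<\infty$ by the geometric decay, $\{d_n(x_0)\}_n$ is Cauchy, hence bounded; so $\|d_n\|_w$ is bounded and $\tfrac{1}{n}\|d_n\|_w\to0$. The identical chain of estimates applies to $U_{fg}$, which proves a).

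I expect the drift estimate for $U(h+\lambda w)$ to be the main obstacle: it is the only step that genuinely couples the nonlinear certainty-equivalent operator to the Lyapunov condition (E1), and it must be set up through the variational formula \eqref{eq:Uop3}. Monotonicity and the constant-shift property alone give only the crude $w$-Lipschitz bound $\|U^nv_1-U^nv_2\|_w\le(1+\beta K)^n\|v_1-v_2\|_w$, whose exponential growth is useless for the $\tfrac{1}{n}$ limit; it is precisely the geometric span contraction, combined with the drift control of the constant offset, that forces sublinearity. By comparison, the span-to-$w$-norm conversion and the invariance of $\mathcal{B}_w^{(M)}$ are routine once Theorem~\ref{theo:Ucontract} is available.
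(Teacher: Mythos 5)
Your proof is correct in substance but takes a genuinely different route from the paper's. The paper never invokes the span contraction of Theorem~\ref{theo:Ucontract}: for part a) it argues by direct induction, using the dual representation \eqref{eq:Uop3} to get the one-step bound $(U_{fg}v_1)(x)-(U_{fg}v_2)(x)\le\int\big(v_1(x')-v_2(x')\big)\,\psi_{v_1}(dx'|x,f(x),g(x))$ (evaluate the inner supremum at the maximizer for $v_1$, which is suboptimal for $v_2$) and then (E1) to propagate this to $(U_{fg}^nv_1)(x)-(U_{fg}^nv_2)(x)\le\|v_1-v_2\|_{w}\big(\beta\bar{\alpha}^nW(x)+(\beta K+1)\sum_{k=0}^{n-1}\bar{\alpha}^k\big)$, which after dividing by $w(x)=1+\beta W(x)$ is bounded uniformly in $n$; part b) is declared analogous. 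That argument is shorter, applies directly to all $v_1,v_2\in\mathcal{B}_w$, and for part a) uses only (E1) and (F) --- no (E2), no reference state, no summability of a geometric series. Your route --- geometric decay of $s_n=\|d_n\|_{sp,w}$ from Theorem~\ref{theo:Ucontract} plus the drift estimate $U(h+\lambda w)\le Uh+\lambda w+\lambda\beta K$ to show that $d_n(x_0)$ is Cauchy --- is sound and even yields the slightly stronger conclusion that $\|d_n\|_w$ stays bounded (indeed converges) rather than merely $o(n)$; note that your drift estimate is the same ``evaluate the dual supremum at $\psi_{h+\lambda w}$, suboptimal for $h$'' device the paper uses, just packaged around the contraction instead of replacing it. One caveat you should repair: Theorem~\ref{theo:Ucontract} is stated only on $\mathcal{B}_w^{(M)}$, and your reduction of general $v_1,v_2\in\mathcal{B}_w$ to the ball is asserted rather than proved --- the claim that the ball is absorbing itself presupposes the contraction outside the ball. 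Since the lemma is later applied (uniqueness of $\phi^*$ in Theorem~\ref{theo:arsr}) to a solution $v\in\mathcal{C}_w$ of the Poisson equation that is not known a priori to lie in $\mathcal{B}_w^{(M)}$, you should either observe that the proof of Theorem~\ref{theo:Ucontract} extends verbatim to all of $\mathcal{B}_w$ once $\psi_v$ is granted to be well defined there (which (E1) and the lemma's statement already presuppose), or bypass the contraction entirely as the paper does.
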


\begin{proof}
\begin{itemize}
  \item[a)]  First we claim for all  $v_1, v_2\in \mathcal{B}_w$, $f\in\mathbb{F}$, $g\in\mathbb{G}$ and $n\in\N$:
  $$ (U_{fg}^n v_1)(x) - (U_{fg}^n v_2)(x) \le \|v_1-v_2\|_{w} \Big( \beta \bar{\alpha}^n W(x) + (\beta K+1) \sum_{k=0}^{n-1} \bar{\alpha}^k\Big).$$
  Note that $\|v_1-v_2\|_{w}<\infty$ since $v_1, v_2\in \mathcal{B}_w$. The proof is by induction on $n$. For $n=1$ we obtain:
  \begin{eqnarray*}
   % \nonumber to remove numbering (before each equation)
       (U_{fg} v_1)(x)-(U_{fg} v_2)(x)  &\le&  \int (v_1(x') - v_2(x')) \psi_{v_1}(dx'|x,f(x),g(x)) \\
            &=&  \int \frac{v_1(x') - v_2(x')}{1+\beta W(x')} (1+\beta W(x'))  \psi_{v_1}(dx'|x,f(x),g(x)) \\
            &\le& \|v_1-v_2\|_{w} \big( 1+ \beta \int W(x') \psi_{v_1}(dx'|x,f(x),g(x))\big) \\
             &\le& \|v_1-v_2\|_{w} \big( \beta \bar{\alpha} W(x) +1+\beta K\big).
    \end{eqnarray*}
%    where $$  \psi_1^{fg}(B)  := \frac{\int_B e^{\gamma v_1(x')}\hat{Q}(dx'|x,f(x),g(x))}{\int_\mathbb{E} e^{\gamma v_1(x')}\hat{Q}(dx'|x,f(x),g(x))}.$$
   This is the statement for $n=1$. Suppose the statement is true for $n$. For $n+1$ we obtain
   \begin{eqnarray*}
   % \nonumber to remove numbering (before each equation)
     &&  (U_{fg}^{n+1} v_1)(x)-(U_{fg}^{n+1} v_2)(x)  \le  \int \Big((U_{fg}^{n} v_1)(x')-(U_{fg}^{n} v_2)(x')\Big)  \psi_{U^n_{fg}v_1}(dx'|x,f(x),g(x)) \\
            &\le& \|v_1-v_2\|_{w}  \int \Big( \beta \bar{\alpha}^n W(x')+(\beta K+1) \sum_{k=0}^{n-1}\bar{\alpha}^k\Big)   \psi_{U^n_{fg}v_1}(dx'|x,f(x),g(x)) \\
            &\le& \|v_1-v_2\|_{w} \Big( \beta \bar{\alpha}^{n+1} W(x) + (\beta K+1) \sum_{k=0}^{n} \bar{\alpha}^k\Big).
    \end{eqnarray*}
  %  where $$  \psi_n^{fg}(B)  := \frac{\int_B e^{\gamma U_{fg}^{n} v_1(x')}\hat{Q}(dx'|x,f(x),g(x))}{\int_\mathbb{E} e^{\gamma U_{fg}^{n} v_1(x')}\hat{Q}(dx'|x,f(x),g(x))}.$$
   Interchanging the roles of $v_1$ and $v_2$ and dividing by $1+\beta W(x)$ yields that $$\| U_{fg}^n v_1-U_{fg}^n v_2\|_{w} \le \|v_1-v_2\|_{w} \cdot const.$$
  which implies the result.
  \item[b)] Similar to part a) and the derivation of the estimate in Theorem \ref{theo:Ucontract}.
\end{itemize}
\end{proof}

For the last result let us denote by $\mathcal{C}_w:=\{ v \in \mathcal{B}_w : v \mbox{ is continuous}\}$ and $\mathcal{C}_w^{(M)}:=\{ v \in \mathcal{B}_w^{(M)} : v \mbox{ is continuous}\}$. Moreover, we need the assumption

\begin{description}
  \item[(A4')] $W$ is continuous and for all $v\in \mathcal{C}_w$ the function  $(x,\mu,\nu) \mapsto \int e^{v(x')} \hat{Q}(dx'|x,\mu,\nu)$ is continuous.
\end{description}

\begin{remark}
Note that  $(A4')$ directly implies \eqref{eq.finite2}, if $\mathbb{E}_0$ is compact.
\end{remark}

Now we are able to prove our main result for the ergodic game.

\begin{theorem}\label{theo:arsr}
Let  $\gamma\in (-\gamma_0,\gamma_0)$ and assume (A1)-(A3),(A4'), (E) and (F).
\begin{itemize}
  \item[a)] The Poisson equation
            $$ \phi+v(x) = (Uv)(x),\quad x\in\mathbb{E}$$
            has a solution $(\phi^*,v^*)\in\R_+\times \mathcal{C}_w$ where $\phi^*$ is unique.
  \item[b)] There exist measurable functions $(f^*,g^*) \in  \mathbb{F}\times \mathbb{G}$ such that $(f^*(x),g^*(x))$ is a saddle point of
   \begin{eqnarray*} && (f,g)\mapsto  (U_{fg} v^*)(x),\quad x\in\mathbb{E}  \end{eqnarray*}
   for all $ (f,g) \in  \mathbb{F}\times\mathbb{G}$. Then $\phi^*$ is the value of the  average risk-sensitive game and the stationary policies $(f^*,f^*,\ldots)$ and $(g^*,g^*,\ldots)$ are average risk-sensitive optimal for player 1 and 2 respectively. In particular, $J_{f^*g^*}(x)=\phi^*=J(x)$ for all $x\in\mathbb{E}$.
  \end{itemize}
\end{theorem}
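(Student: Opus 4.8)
The plan is to solve the Poisson equation by exhibiting $(\phi^*,v^*)$ as a fixed point of $U$ in the span seminorm, and then to run a verification argument based on the saddle point of $(f,g)\mapsto (U_{fg}v^*)(x)$. The decisive structural fact is that $U$ commutes with the addition of constants: since $U_{fg}(v+c)(x)=\frac1\gamma\ln\int e^{\gamma(v+c)(x')}\tilde{Q}(dx'|x,f(x),g(x))=c+(U_{fg}v)(x)$ and the $\inf_g\sup_f$ preserves this, we have $U(v+c)=Uv+c$. Hence $U$ descends to the quotient of $\mathcal{C}_w^{(M)}$ modulo constants, on which $\|\cdot\|_{sp,w}$ is a genuine norm by Lemma \ref{lem:norm}.

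For part a) I would first check that $U$ maps $\mathcal{C}_w^{(M)}$ into itself. Continuity of $Uv$ follows from (A1)--(A3), (A4') together with the measurable selection and minimax theorems, exactly as in Theorem \ref{theo:finite}. For the span bound, note that $(U0)(x)=\inf_\nu\sup_\mu\hat{r}(x,\mu,\nu)\in[0,\bar{r}]$ because $\hat{Q}$ is a probability measure, so $\|U0\|_{sp,w}\le\bar{r}/2$; combined with Theorem \ref{theo:Ucontract} this gives $\|Uv\|_{sp,w}\le\alpha M+\bar{r}/2\le M$ since $M=\bar{r}/(1-\alpha)$. Theorem \ref{theo:Ucontract} then makes $U$ an $\alpha$-contraction on the complete quotient space, and Banach's fixed point theorem yields a unique class with representative $v^*$ satisfying $Uv^*=v^*+\phi^*$ for a constant $\phi^*$; the representative is continuous because $w$-norm convergence is uniform on compacta (here $W$ continuous is used), so $v^*\in\mathcal{C}_w$ and the Poisson equation holds. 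Uniqueness of $\phi^*$ I would obtain from $U^nv=v+n\phi$ (the constant-shift property) and Lemma \ref{lem:v12}b): if $(\phi_1,v_1)$ and $(\phi_2,v_2)$ both solve it, then $\frac1n\|U^nv_1-U^nv_2\|_w\to0$ forces $|\phi_1-\phi_2|/w(x)\le0$. Finally $\phi^*\ge0$ because $r\ge0$ makes every $J_{\pi\sigma}\ge0$, and $\phi^*=J$ by part b).

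For part b), existence of the saddle point of $(f,g)\mapsto(U_{fg}v^*)(x)$ reduces to the discounted case: the map $(\mu,\nu)\mapsto\int e^{\gamma v^*(x')}\tilde{Q}(dx'|x,\mu,\nu)$ is bilinear, and since $t\mapsto\frac1\gamma\ln t$ is strictly monotone it commutes with $\inf_\nu\sup_\mu$ (interchanging the two extrema when $\gamma<0$), so the bilinear minimax theorem used before (via \cite{bp73,fan53}) supplies a measurable saddle point $(f^*,g^*)$ with $U_{f^*g^*}v^*=Uv^*=v^*+\phi^*$. The verification then proceeds as follows. Iterating gives $U_{f^*g^*}^nv^*=v^*+n\phi^*$, while $(U_{f^*g^*}^n0)(x)=\frac1\gamma\ln\Eop_x^{f^*g^*}[\exp(\gamma\sum_{k=0}^{n-1}r(X_k,A_k,B_k))]$ is exactly the $n$-stage risk-sensitive reward; Lemma \ref{lem:v12}a) gives $\frac1n\|U_{f^*g^*}^n0-U_{f^*g^*}^nv^*\|_w\to0$, whence $\frac1n(U_{f^*g^*}^n0)(x)\to\phi^*$ and $J_{f^*g^*}(x)=\phi^*$ for all $x$. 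For optimality against arbitrary opponents I would use the saddle inequalities $U_{fg^*}v^*\le v^*+\phi^*\le U_{f^*g}v^*$ for all $f,g$. Monotonicity of the $U_{fg}$-operators together with the constant-shift property give, for any $\pi=(f_0,f_1,\ldots)$, that $U_{f_0g^*}\cdots U_{f_{n-1}g^*}v^*\le v^*+n\phi^*$, and the one-step estimate underlying Lemma \ref{lem:v12}a) extends verbatim to such non-stationary products (each step only uses (E1), valid for all actions) so as to control $U_{f_0g^*}\cdots U_{f_{n-1}g^*}0$ against $U_{f_0g^*}\cdots U_{f_{n-1}g^*}v^*$ up to an $o(n)$ term. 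Dividing by $n$ yields $J_{\pi g^*}(x)\le\phi^*$; symmetrically $J_{f^*\sigma}(x)\ge\phi^*$. Since always $\sup_\pi\inf_\sigma\le\inf_\sigma\sup_\pi$, these two bounds collapse to $J(x)=\phi^*$ and show $(f^*,g^*)$ is a saddle point.

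The main obstacle I expect to be part a): turning the span-seminorm contraction of Theorem \ref{theo:Ucontract} into an honest fixed point. This requires the self-map check on $\mathcal{C}_w^{(M)}$ (so the contraction keeps applying), the passage to the complete quotient modulo constants, and the argument that the limit is genuinely continuous and lies in $\mathcal{C}_w^{(M)}$. The second delicate point is verifying that the explicit one-step bound behind Lemma \ref{lem:v12}a) survives for non-stationary compositions, which is what lets the verification run against arbitrary Markov policies rather than only stationary ones.
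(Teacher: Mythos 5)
Your proposal is correct and follows essentially the same route as the paper: the span-seminorm contraction of Theorem \ref{theo:Ucontract} applied to the iterates of $U$ starting from $0$ (your Banach-fixed-point-on-the-quotient packaging is equivalent to the paper's explicit Cauchy-sequence argument for $v_n=U^n0$, including the same self-map bound $\|Uv\|_{sp,w}\le\alpha M+\bar r\le M$), Lemma \ref{lem:v12} for uniqueness of $\phi^*$ and for the verification $J_{f^*g^*}=\phi^*$, and the saddle-point inequalities iterated against arbitrary Markov policies. Your explicit remark that the one-step estimate behind Lemma \ref{lem:v12}a) must be (and is) valid for non-stationary compositions $U_{f_0g^*}\cdots U_{f_{n-1}g^*}$ is a point the paper uses implicitly without comment.
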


\begin{proof}
\begin{itemize}
  \item[a)]  We consider the following sequence of functions: $v_0 := 0, v_n := U^n 0$ for $n\in\N$. We claim that $v_n\in\mathcal{C}_w^{(M)}$. The statement is obvious for $v_0$. Suppose the statement is true for $v_n$. For $n+1$ we obtain:
      \begin{eqnarray*}
      % \nonumber to remove numbering (before each equation)
        \|v_{n+1}\|_{sp,w} &\le& \|v_{n+1}-v_n\|_{sp,w}+\ldots + \|v_{2}-v_1\|_{sp,w} +\|v_{1}\|_{sp,w} \\
         &=& \|U(U^{n}0)-U(U^{n-1}0)\|_{sp,w}+\ldots + \|U(U0)-U0\|_{sp,w} +\|U0\|_{sp,w} \\
         &\le& \alpha^{n-1}   \|U0\|_{sp,w} + \alpha^{n-2} \|U0\|_{sp,w}+\ldots +\|U0\|_{sp,w}\\
         &\le& \frac{\bar{r}}{1-\alpha}=M.
      \end{eqnarray*}
      Moreover, (A1)-(A3),(A4') imply that $v_{n+1}\in\mathcal{C}_w^{(M)}$.
 Hence according to Theorem \ref{theo:Ucontract} $(v_n)$ is a Cauchy sequence in $\mathcal{C}_w$ with limit $v^*\in \mathcal{C}_w$ and $\|Uv^*-v^*\|_{sp,w} =0$. Hence there exists a constant $\phi^*\in\R$ with $\phi^*=Uv^*-v^*$ or equivalently $\phi^*+v^*=Uv^*$. It remains to show the uniqueness of $\phi^*$. Suppose that there exists another pair $(\phi,v)\in\R\times\mathcal{C}_w $ which satisfies the Poisson equation, i.e. $\phi+v=Uv$. Iterating both inequalities $n$-times yields $n \phi+v = U^n v$ and $n\phi^*+v^*=U^nv^*$. Thus, $\frac1n \| n\phi+v-n\phi^*-v^*\|_{w} = \frac1n \|U^n v-U^nv^*\|_{w}\to0$ for $n\to\infty$ according to Lemma \ref{lem:v12}, which implies $\phi=\phi^*$.
  \item[b)] The existence of the saddle point $(f^*,g^*)$ follows again from the classical selection theorem and the minimax theorem. The saddle point property implies that $U_{f^*g^*} v^* = Uv^*$. Thus we obtain $n\phi^*+v^*= U_{f^*g^*}^n v^*$ which implies $\frac1n \|n\phi^*- U_{f^*g^*}^n v^*\|_{w} \to 0$ for $n\to\infty$. Since according to Lemma \ref{lem:v12} we obtain $\frac1n \|U_{f^*g^*}^n0- U_{f^*g^*}^n v^*\|_{w} \to 0$ for $n\to\infty$, it follows that $J_{f^*g^*}(x) =\phi^*\in\R$ for all $x\in\mathbb{E}.$  As in the proof of Theorem \ref{theo:posra} we obtain for arbitrary policies $\pi=(f_1,f_2,\ldots )$ and $\sigma=(g_1,g_2,\ldots )$ that
     $$ U_{f_1g^*}\ldots U_{f_ng^*}  v^*\le U_{f^*g^*}\ldots U_{f^*g^*}  v^*  \le U_{f^*g_1}\ldots U_{f^*g_n} v^*.$$ This implies again due to Lemma \ref{lem:v12} that $$J_{\pi g^*}\le J_{f^*g^*} \le J_{f^*\sigma}.$$ But this finally implies that the pair $\big((f^*,f^*,\ldots),(g^*,g^*,\ldots)\big)$ is a saddle point and $J(x)=J_{f^*g^*}(x)=\phi^*.$ Moreover, the last equation shows that $\phi^*\in\R_+$.
\end{itemize}

\end{proof}

\begin{remark}
%\begin{itemize}  \item[a)]
Solutions of the Poisson equation can also be constructed by means of finite horizon games or by the so-called vanishing discount approach (see \cite{ri98}, for the risk-neutral case.)
%  \item[b)] The approach we have used here can be extended to unbounded reward functions. In the few cases where we have used the boundedness of $r$ to get some estimates one has to assume the finiteness of certain integrals by the use of weight functions.
%\end{itemize}
\end{remark}

\begin{remark}\label{rem:SAT}
Assumption (E1) seems to be very restrictive at first glance because we suppose that it holds for all $v\in\mathcal{B}_w$. However, in applications it is often  reasonable to restrict $\mathcal{B}_w$ further to functions $v$ with certain properties. We give a quite general example here: Suppose that $\mathbb{E}=\R$ and $\mathbb{A}_x$ and $\mathbb{B}_x$ are independent of $x$. Moreover, let $\gamma<0$ and assume that $W$ is increasing (which is satisfied in almost all applications). Further we assume that a so-called 'monotone' risk-sensitive game is given, i.e. we assume
\begin{itemize}
  \item[(i)] $x\mapsto r(x,a,b)$ is increasing for all $a\in \mathbb{A}, b\in \mathbb{B}$.
  \item[(ii)] ${Q}(\cdot|x,a,b)$ is stochastically monotone for all $a\in \mathbb{A}, b\in \mathbb{B}$, i.e. ${Q}(\cdot|x,a,b) \le_{st} {Q}(\cdot|x',a,b)$ if $x\le x'$ where $\le_{st}$ is the usual stochastic order (see e.g. \cite{ms}, Chapter 5).
       %If e.g. $r$ is independent of $a$ and $b$ (as assumed in \cite{bg14}) or $r$ is additive in the sense that $r(x,a,b)=h_1(x)+h_2(a,b)$ for measurable functions $h_1, h_2$, then this condition reduces to the stochastic monotonicity of the underlying transition kernel $Q$.
\end{itemize}
Under these structural assumptions on the game, it is easy to see that the $U$-operator maps increasing functions into increasing functions (for a discussion in the MDP context see \cite{br11}, chapter 2.4.4). In this case we can restrict our considerations to the set
$$ \mathcal{B}_w^I :=  \mathcal{B}_w\cap \{v:\mathbb{E}\to \R : v \mbox{ is increasing } \}$$
%Note also that in the proof of Theorem \ref{theo:arsr}, Lemma \ref{lem:v12} b) is only used with $U^n_{f^*g^*}v^*=U^nv^*$.
since the interesting functions $v_n$ in Theorem \ref{theo:arsr} satisfy $v_n=U^n0 \in \mathcal{B}_w^I$. Note that $v^*\in \mathcal{B}_w$ by the general theory and $v^*$ is increasing as the limit of the Cauchy sequence $(v_n)$. Hence we need (E1) only for functions $v\in\mathcal{B}_w^I$ in which case the condition reduces to:
\begin{eqnarray*}  \int W(x') \psi_v(dx'|x,\mu,\nu) &=& \frac{\int W(x') e^{\gamma v(x')} \hat{Q}(dx'|x,\mu,\nu)}{\int e^{\gamma v(x')} \hat{Q}(dx'|x,\mu,\nu)}\\
&\le & \int W(x')  \hat{Q}(dx'|x,\mu,\nu). \end{eqnarray*}
due to the fact that the random variables $W(X)$ and $ e^{\gamma v(X)}$ are negatively correlated which implies
$$ \int W(x') e^{\gamma v(x')} \hat{Q}(dx'|x,\mu,\nu) \le \Big( \int W(x') \hat{Q}(dx'|x,\mu,\nu)\Big)\Big(\int  e^{\gamma v(x')} \hat{Q}(dx'|x,\mu,\nu)\Big).$$
Hence (E1) is satisfied when
\begin{eqnarray}\label{eq:ass}  \int W(x') \hat{Q}(dx'|x,\mu,\nu) &\le&  \bar{\alpha} W(x) +K  \end{eqnarray}
which does not depend on $v\in\mathcal{B}_w$. A sufficient condition for \eqref{eq:ass} is
$$  \int W(x') {Q}(dx'|x,a,b) \le  e^{\gamma \bar{r}}\Big( \bar{\alpha} W(x) +K \Big)$$
for all $a\in \mathbb{A}, b\in \mathbb{B}$.
\end{remark}

%\subsection{Negative risk-aversion}
%Here we assume that $\gamma\in[\bar{\gamma},0)$. Theorem \ref{theo:Ucontract}, Lemma \ref{lem:v12} and Theorem \ref{theo:arsr} hold true in this case. The only difference is in the proof of Theorem \ref{theo:Ucontract}. At the beginning we have to use the estimate
%$$(Uv)(x_1) -(Uv)(x_2)\le   \bar{r} + \frac1\gamma \ln \frac{\underline{\alpha}}{\bar{\alpha}}$$ and at the end we have to use
%$$  \psi_{in}^{\mu_{in},\nu_{in}}(B_n) \ge e^{\gamma \|v\|_{sp}} \hat{Q}(B_n|x_{in},\mu_{in},\nu_{in})\ge e^{\gamma \|v\|_{sp}} \underline{\alpha} \lambda(B_n).$$
%All other arguments stay the same.

{\bf Acknowledgements:} The authors would like to thank two referees for helpful comments and suggestions which improved the presentation of the paper.

\bibliographystyle{plainnat}

\end{document}